\let\mathcal\mathscr
\numberwithin{equation}{section}
\newtheorem{theorem}{Theorem}[section]
\newtheorem{proposition}[theorem]{Proposition}
\theoremstyle{definition}
\newtheorem{example}[theorem]{Example}
\newtheorem{remark}[theorem]{Remark}
\newtheorem*{remark*}{Remark}
\newcommand{\BA}{{\mathbb {A}}}
\newcommand{\BC}{{\mathbb {C}}}
\newcommand{\BG}{{\mathbb {G}}}
\newcommand{\BL}{{\mathbb {L}}}
\newcommand{\BN}{{\mathbb {N}}}
\newcommand{\BP}{{\mathbb {P}}}
\newcommand{\BQ}{{\mathbb {Q}}}
\newcommand{\BR}{{\mathbb {R}}}
\newcommand{\BZ}{{\mathbb {Z}}}
\newcommand{\CA}{{\mathcal {A}}}
\newcommand{\CD}{{\mathcal {D}}}
\newcommand{\CF}{{\mathcal {F}}}
\newcommand{\CG}{{\mathcal {G}}}
\newcommand{\CH}{{\mathcal {H}}}
\newcommand{\CI}{{\mathcal {I}}}
\newcommand{\CJ}{{\mathcal {J}}}
\newcommand{\CK}{{\mathcal {K}}}
\newcommand{\CN}{{\mathcal {N}}}
\newcommand{\CO}{{\mathcal {O}}}
\newcommand{\CS}{{\mathcal {S}}}
\newcommand{\CT}{{\mathcal {T}}}
\newcommand{\CU}{{\mathcal {U}}}
\newcommand{\CW}{{\mathcal {W}}}
\newcommand{\bb}{{\mathbf{b}}}
\renewcommand{\phi}{\varphi}
\renewcommand{\rho}{\varrho}
\renewcommand{\epsilon}{\varepsilon}
\newcommand{\bx}{\boldsymbol{x}}
\newcommand{\bt}{\boldsymbol{t}}
\newcommand{\bc}{\boldsymbol{c}}
\newcommand{\blambda}{\boldsymbol{\lambda}}
\newcommand{\bLambda}{\boldsymbol{\Lambda}}
\newcommand{\bGamma}{\boldsymbol{\Gamma}}
\newcommand{\bsigma}{\boldsymbol{\sigma}}
\newcommand{\balpha}{\boldsymbol{\alpha}}
\newcommand{\e}{\textup{e}}
\title[Ternary quadratic forms III]{Quantitative strong approximation for ternary quadratic forms III}
\author{Zhizhong Huang}
\address{State Key Laboratory of Mathematical Sciences, Academy of Mathematics and Systems Science, Chinese Academy of Sciences, Beijing 100190, China}
\email{zhizhong.huang@yahoo.com}
\date{September 2023, Revised December 2025}
\begin{document}
	\begin{abstract}
		We prove asymptotic formulas for counting (primitive) integral points with local conditions on the (punctured) affine cone defined by a non-singular integral ternary quadratic form, and we relate our results to the Brauer--Manin obstruction. Our approach is based on the $\delta$-variant of the Hardy--Littlewood circle method developed by Heath-Brown.
	\end{abstract}
	\maketitle
	\tableofcontents
	\section{Introduction}
	
	In \cite{H-Bdelta}, Heath-Brown developed his $\delta$-circle method and for the first time applied to counting integer solutions of homogeneous ternary quadratic forms. This gave a new analytic proof of the Hasse--Minkowski theorem for ternary quadratic forms which goes back to Legendre (cf. \cite[IV. Theorem 8 (ii)]{Serre}). We begin by recalling his results. 
	
	Let $F(x_1,x_2,x_3)\in\BZ[x_1,x_2,x_3]$ be a non-degenerate indefinite ternary integral quadratic form.  Let $w:\BR^3\to\BR$ be a fixed infinitely differentiable weight function of class $\mathcal{C}(S)$ as in \cite[\S2]{H-Bdelta}, and we define the \emph{weighted singular integral} (with respect to the quadratic form $F$) to be \begin{equation}\label{eq:singint}
		\CI(w):=\iint_{\BR^3\times\BR}w(\bt)\e(\theta F(\bt))\operatorname{d}\bt\operatorname{d}\theta.
	\end{equation} 
	We let $$\widehat{\mathfrak{S}}:=\prod_{p<\infty}\left(1-\frac{1}{p}\right)\sigma_{p},$$ 
	where for every prime $p$, we define the \emph{$p$-adic local density} as
	\begin{equation}\label{eq:sigmap}
		\sigma_{p}:=\lim_{k\to\infty}\frac{\#\{\bx\in(\BZ/p^{k}\BZ)^3: F(\bx)\equiv 0\bmod p^k\}}{p^{2k}}.
	\end{equation}
	This infinite product is absolutely convergent, as shown in \cite{H-Bdelta}.
	
	\begin{theorem}[Heath-Brown \cite{H-Bdelta} Theorem 8, Corollary 2]\label{thm:H-Bmain}
		We have $$\sum_{\bx\in\BZ^3:F(\bx)=0}w\left(\frac{\bx}{B}\right)=\frac{1}{2}\CI(w)\widehat{\mathfrak{S}} B\log B+\sigma_1(w)B+O_\varepsilon(B^{\frac{5}{6}+\varepsilon}),$$
		$$\sum_{\bx\in\BZ_{\operatorname{prim}}^3:F(\bx)=0}w\left(\frac{\bx}{B}\right)=\frac{1}{2}\CI(w)\widehat{\mathfrak{S}} B+O\left(B\exp\left(-c_0(\log B)^\frac{1}{2}\right)\right).$$
	\end{theorem}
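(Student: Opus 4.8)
The plan is to establish the first (inhomogeneous) asymptotic directly via Heath-Brown's $\delta$-variant of the circle method, and then to deduce the second (primitive) asymptotic from it by Möbius inversion over the content of $\bx$. Write $N_w(B):=\sum_{\bx\in\BZ^3}w(\bx/B)\,\delta(F(\bx))$, where $\delta$ is the indicator of $\{0\}\subset\BZ$, and insert Heath-Brown's identity $\delta(n)=c_QQ^{-2}\sum_{q\ge1}\sum_{a\bmod q}^{*}\e(an/q)\,h(q/Q,n/Q^2)$ from \cite{H-Bdelta} with $Q=B$ (legitimate since $F(\bx)\ll B^2$ on the support of $w(\cdot/B)$), the smooth weight $h$ being supported in $q\ll Q$. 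Writing $\bx$ in residue classes modulo $q$ and applying Poisson summation to each class dualises the $\bx$-sum into a sum over $\bc\in\BZ^3$, yielding
\[
N_w(B)=c_B\,B\sum_{q\ge1}q^{-3}\sum_{\bc\in\BZ^3}S_q(\bc)\,I_q(\bc),
\]
with the complete exponential sums $S_q(\bc)=\sum_{a\bmod q}^{*}\sum_{\bz\bmod q}\e\!\big((aF(\bz)+\bc\cdot\bz)/q\big)$ and the oscillatory integrals $I_q(\bc)=\int_{\BR^3}w(\bu)\,h(q/B,F(\bu))\,\e(-(B/q)\bc\cdot\bu)\operatorname{d}\bu$.

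\smallskip
\noindent\emph{Non-zero frequencies and the main term.} For $\bc\neq0$ one estimates $S_q(\bc)$ by multiplicativity in $q$ and explicit Gauss- and Kloosterman-type evaluations of the prime-power components (tracking the savings at primes dividing $\det F$ or the value of the adjoint form at $\bc$), and one estimates $I_q(\bc)$ via the support and derivative properties of $h$, which confine the effective range to $q\asymp B$. Pointwise bounds do not suffice here; one must extract cancellation from the sum over the moduli $q$ (equivalently, a Kloosterman-type refinement merging $q$ and $a$), and this is what produces the bound $O_\epsilon(B^{5/6+\epsilon})$ for the total $\bc\neq0$ contribution. The frequency $\bc=0$ gives the main term $c_B\,B\sum_{q\ge1}q^{-3}S_q(0)\,I_q(0)$. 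The decisive structural fact is that the Dirichlet series $\sum_q q^{-3-s}S_q(0)$ has a simple pole at $s=0$ of residue $\tfrac12\widehat{\mathfrak{S}}$: a direct computation of the prime-power sums gives $\sum_{k\ge0}p^{-3k}S_{p^k}(0)=\sigma_p=1+p^{-1}+O(p^{-2})$ at good primes, so that $\sum_q q^{-3-s}S_q(0)$ carries a factor $\zeta(1+2s)$ and the singular series $\sum_q q^{-3}S_q(0)$ of a ternary form \emph{diverges} — this is the source of the logarithm. Since moreover $I_q(0)\to\CI(w)$ as $q/B\to0$, the Mellin transform of $I_q(0)$ also has a simple pole at $s=0$, of residue $\CI(w)$; representing $\sum_q q^{-3}S_q(0)I_q(0)$ by a Mellin integral and shifting the contour past the resulting \emph{double} pole at $s=0$ yields $\tfrac12\CI(w)\widehat{\mathfrak{S}}\,B\log B$ from the $\log B$-part of the residue, $\sigma_1(w)B$ from its constant part, and $O_\epsilon(B^{5/6+\epsilon})$ from the shifted contour. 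This proves the first formula.

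\smallskip
\noindent\emph{The primitive count.} Writing $\bx=d\by$ with $\by$ primitive and using $F(d\by)=d^2F(\by)$ together with $w(d\by/B)=w(\by/(B/d))$, one has $N_w(B)=\sum_{d\ge1}N^{\operatorname{prim}}_w(B/d)$, hence $N^{\operatorname{prim}}_w(B)=\sum_{d\ge1}\mu(d)\,N_w(B/d)$ (a finite sum, once the point $\bx=0$ is handled separately). I would split at $d\le B^{1/2}$. For $d\le B^{1/2}$, insert the first formula: the error terms contribute $\sum_{d\le B^{1/2}}O_\epsilon\!\big((B/d)^{5/6+\epsilon}\big)=O_\epsilon(B^{11/12+\epsilon})$, and for the main terms the identity $(B/d)\log(B/d)=(B/d)\log B-(B/d)\log d$, together with the prime number theorem bounds $\sum_{d\le x}\mu(d)/d\ll\exp(-c\sqrt{\log x})$ and the convergent evaluation $\sum_{d\ge1}\mu(d)(\log d)/d=-1$, give
\[
\sum_{d\le B^{1/2}}\mu(d)\Big(\tfrac12\CI(w)\widehat{\mathfrak{S}}\,\tfrac Bd\log\tfrac Bd+\sigma_1(w)\tfrac Bd\Big)=\tfrac12\CI(w)\widehat{\mathfrak{S}}\,B+O\!\big(B\exp(-c\sqrt{\log B})\big),
\]
so the logarithmic main term of the first formula \emph{cancels} here — which is why the primitive count carries no $\log B$. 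For $d>B^{1/2}$, reverse the order of summation: for each $\by$ with $F(\by)=0$ (necessarily $|\by|\ll B^{1/2}$), partial summation against $\sum_{n\le x}\mu(n)\ll x\exp(-c\sqrt{\log x})$ bounds $\sum_{d>B^{1/2}}\mu(d)\,w(d\by/B)$ by $\ll(B/|\by|)\exp(-c\sqrt{\log(B/|\by|)})$, and summing over the $\ll R\log R$ solutions with $|\by|\asymp R$ (a count supplied by the first formula) and over dyadic $R\ll B^{1/2}$ gives $\ll B(\log B)^{O(1)}\exp(-c\sqrt{\log B})$. Assembling the pieces yields the stated asymptotic with error $O\!\big(B\exp(-c_0(\log B)^{1/2})\big)$.

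\smallskip
\noindent\emph{Main obstacle.} The hard step for the first formula is the uniform power-saving bound $O_\epsilon(B^{5/6+\epsilon})$ for the $\bc\neq0$ terms: it requires genuine cancellation among the exponential sums over the critical range $q\asymp B$, and it is where the exponent $5/6$, rather than anything smaller, is forced. For the primitive count — the final statement — the delicate point is purely quantitative: the truncation must be placed so that the summed circle-method errors stay below $B\exp(-c_0\sqrt{\log B})$ while still permitting the $B\log B$ main term to cancel, and the magnitude of that cancellation is controlled only through the classical zero-free region of $\zeta$, which is precisely why the error takes the shape $\exp(-c_0(\log B)^{1/2})$.
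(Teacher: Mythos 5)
Your route is the same $\delta$-method route as Heath-Brown's original proof (which the paper only quotes; it reruns the same machinery for the congruence-restricted versions in \S\ref{se:count}), but it contains one genuinely wrong step: the claim that the \emph{total} contribution of the non-zero frequencies $\bc\neq\boldsymbol{0}$ is $O_\varepsilon(B^{5/6+\varepsilon})$, so that $\sigma_1(w)B$ comes entirely from the constant part of the double-pole residue at $\bc=\boldsymbol{0}$. After evaluating the Gauss sums, the averaged sums $\sum_{q\le X}q^{-2}S_q(\bc)$ are governed by the real character $q'\mapsto\left(\frac{-F^{*}(\bc)}{q'}\right)$ on the square-free part of $q$; this character is principal precisely when $F^{*}(\bc)=0$ or $-F^{*}(\bc)$ is a non-zero perfect square, and for such $\bc$ there is no cancellation in the $q$-sum: one gets $\sum_{q\le X}q^{-2}S_q(\bc)=\eta(\bc)X+O_\varepsilon(|\bc|X^{7/8+\varepsilon})$ with $\eta(\bc)\neq 0$ in general. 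This is exactly what Proposition \ref{prop:qsumcneq0} proves (its $L=1$ special case is the corresponding analysis in \cite{H-Bdelta}). Paired with $\CJ_w(\bc)$, which decays rapidly in $|\bc|$ but is not identically zero, these special frequencies contribute a genuine constant times $B$ — the analogue of $\CK_{L,\bGamma}(w)$ in Proposition \ref{prop:cneq0} and \eqref{eq:CK} — which is part of $\sigma_1(w)$ and cannot be absorbed into $O_\varepsilon(B^{5/6+\varepsilon})$. Because the theorem leaves $\sigma_1(w)$ unspecified, the \emph{statement} is not threatened, but the bound you propose to prove is false, and the repair is precisely the extraction of these linear main terms. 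This is also not a cosmetic point in the context of the paper: after Möbius inversion it is exactly these frequencies, twisted by real characters modulo $L$, that survive as $\CG_{L,\bGamma}(w)$ in Theorem \ref{thm:mainprimitive} and \eqref{eq:CG}, the term responsible for possible local obstructions.

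The second half of your argument — Möbius inversion over the content, splitting at $d\le\sqrt{B}$, the prime-number-theorem bounds $\sum_{d\le D}\mu(d)/d\ll\exp(-c\sqrt{\log D})$ and $\sum_{d}\mu(d)\log d/d=-1$, and reversing the order of summation for $d>\sqrt{B}$ — is sound and is the same argument as \cite[p.~204--205]{H-Bdelta}, which the paper also follows in its proof of Theorem \ref{thm:mainprimitive}. It happens to be insensitive to the misattribution above, because every secondary constant of size $O(B/d)$, whatever its provenance, is multiplied by $\sum_{d\le\sqrt{B}}\mu(d)/d$ and hence disappears into the error; only the coefficient of $(B/d)\log(B/d)$ survives, which is why the primitive count has main term $\frac{1}{2}\CI(w)\widehat{\mathfrak{S}}B$ and no logarithm.
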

	
	However, it is remarked in \cite[p. 161]{H-Bdelta} that there can exist local obstructions for (primitive) integral solutions. Adding congruence conditions into the counting comes with some technical affection, such that the asymptotic formula may not appear as the expected form. 	
	 Based on the approaches developed in \cite{PART1,HuangTernary}, the goal of the present paper is to address this issue more thoroughly, to work out explicit asymptotic formulas, and to relate them to the Brauer--Manin obstruction, in spirit of \cite{PART2}.
	 
	 \bigskip
	 Let $$C:=(F=0)\subset\BP^2$$ which is a projective conic, $$W:=(F=0)\subset \BA^3$$ be the affine cone over $C$, and $$W^o:=(F=0)\subset \BA^3\setminus\boldsymbol{0}$$ be the punctured affine cone. Let $\mathcal{C},\CW,\CW^o$ be respectively the integral models for them also defined by $F$. Zero solutions of $F$ corresponds to integral points on $\CW$, whereas primitive zero solutions can be viewed as integral points on $\CW^o$, which, up to sign, can identified with rational points of $C$.
	
	To state our results, we fix $L\in\BN$ and $\bGamma\in \CW^o(\BZ/L\BZ)$.
	For every prime $p$, we define the \emph{$p$-adic local density} (with respect to the congruence condition $(L,\bGamma)$) to be \begin{equation}\label{eq:sigmapL}
		\sigma_{p}(L,\bGamma):=\lim_{k\to\infty}\frac{\#\{\bx\in(\BZ/p^{k}\BZ)^3: F(\bx)\equiv 0\bmod p^k,\bx\equiv\bGamma\bmod p^{\operatorname{ord}_p(L)}\}}{p^{2k}},
	\end{equation} and we define \emph{the ``modified'' singular series} (with the convergence factors $\left(1-p^{-1}\right)_p$) to be \begin{equation}\label{eq:singser}
	\widehat{\mathfrak{S}}_{L,\bGamma}:=\prod_{p<\infty}\left(1-\frac{1}{p}\right)\sigma_{p}(L,\bGamma).
	\end{equation}
		We define the (weighted) counting functions respectively for $\CW$ and $\CW^o$:
 \begin{equation}\label{eq:CNW}
	\CN_{\CW}(w,(L,\bGamma);B):=\sum_{\substack{\bx\in\BZ^3,F(\bx)=0\\ \bx\equiv \bGamma\bmod L}}w\left(\frac{\bx}{B}\right),
\end{equation}
\begin{equation}\label{eq:CNWo}
		\CN_{\CW^o}(w,(L,\bGamma);B):=\sum_{\substack{\bx\in\BZ_{\text{prim}}^3,F(\bx)=0\\ \bx\equiv \bGamma\bmod L}}w\left(\frac{\bx}{B}\right).
\end{equation}
	
Our main counting results are as follows, which will be established in \S\ref{se:count}.
\begin{theorem}\label{thm:maincountingint}
	There exists $\CH_{L,\bGamma}(w)\in\BC$ such that
	$$\CN_{\CW}(w,(L,\bGamma);B)=\frac{1}{2}\widehat{\mathfrak{S}}_{L,\bGamma}\CI(w)B\log B+\CH_{L,\bGamma}(w)B+O_\varepsilon(B^{\frac{5}{6}+\varepsilon}).$$
\end{theorem}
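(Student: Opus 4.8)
The plan is to run Heath-Brown's $\delta$-variant of the circle method directly on $\CN_{\CW}(w,(L,\bGamma);B)$, incorporating the congruence condition $\bx\equiv\bGamma\bmod L$ from the outset, and to track how it perturbs the arithmetic and archimedean factors of Theorem~\ref{thm:H-Bmain}. Throughout, $Q\asymp B$ is the parameter of the $\delta$-expansion.

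\emph{Step 1: activation and Poisson summation.} Writing $\delta(n)$ for the indicator of $n=0$ and inserting Heath-Brown's smooth expansion $\delta(n)=c_Q Q^{-2}\sum_q\sum_{a\bmod q}^{*}e_q(an)\,h(q/Q,n/Q^2)$, one obtains
\begin{equation*}
	\CN_{\CW}(w,(L,\bGamma);B)=\frac{c_Q}{Q^2}\sum_q\sum_{a\bmod q}^{*}\ \sum_{\substack{\bx\in\BZ^3\\ \bx\equiv\bGamma\bmod L}} w\!\l(\tfrac{\bx}{B}\r)\,e_q\!\l(aF(\bx)\r)\,h\!\l(\tfrac{q}{Q},\tfrac{F(\bx)}{Q^2}\r).
\end{equation*}
Substituting $\bx=\bGamma+L\by$, so that the $\by$-sum is unrestricted and $F(\bGamma+L\by)$ is a quadratic polynomial in $\by$, and applying Poisson summation to the $\by$-sum modulo $q$, produces a dual sum
\begin{equation*}
	\CN_{\CW}(w,(L,\bGamma);B)=\frac{c_Q\,B^3}{Q^2 L^3}\sum_q\frac{1}{q^3}\sum_{\bc\in\BZ^3}S_q(L,\bGamma;\bc)\,I_q(\bc),
\end{equation*}
where $S_q(L,\bGamma;\bc)=\sum_{a\bmod q}^{*}\sum_{\bb\bmod q}e_q\!\l(aF(\bGamma+L\bb)+\bb\cdot\bc\r)$ is a complete quadratic exponential sum and $I_q(\bc)$ is the usual oscillatory integral built from $w$ and $h$.

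\emph{Step 2: the diagonal term $\bc=\mathbf 0$.} Following Heath-Brown's analysis of the diagonal term, the archimedean integral $I_q(\mathbf 0)$ paired with the $q$-sum reproduces $\tfrac12\CI(w)B\log B$ together with a secondary contribution of exact order $B$ — the same $B$-term already visible in Heath-Brown's first formula — whose coefficient, combined with the arithmetic factor, we take as the definition of $\CH_{L,\bGamma}(w)$. The arithmetic part is $\sum_q q^{-3}S_q(L,\bGamma;\mathbf 0)$, which is multiplicative in $q$; evaluating its local factor at each prime $p$ and comparing with \eqref{eq:sigmapL} identifies its regularised value with the modified singular series $\widehat{\mathfrak{S}}_{L,\bGamma}$ of \eqref{eq:singser}. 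Hence the $\bc=\mathbf 0$ term contributes $\tfrac12\widehat{\mathfrak{S}}_{L,\bGamma}\CI(w)B\log B+\CH_{L,\bGamma}(w)B$ up to an admissible error.

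\emph{Step 3: the off-diagonal terms $\bc\ne\mathbf 0$, and the main difficulty.} For $\bc\ne\mathbf 0$ I bound $S_q(L,\bGamma;\bc)$ by the standard estimates for quadratic exponential sums — square-root cancellation away from the ``bad'' moduli (those divisible by $\det F$ or by a prime dividing $L$), with the usual controlled loss on the bad moduli — and exploit the rapid decay of $I_q(\bc)$ in $|\bc|$ obtained by repeated integration by parts; summing over $q\le Q\asymp B$ and over $\bc$ then gives $O_\varepsilon(B^{5/6+\varepsilon})$, the exponent $\tfrac{5}{6}$ arising exactly as in \cite{H-Bdelta}. The main obstacle is Step~2: as for every ternary form treated by the $\delta$-method, the sum $\sum_q q^{-3}S_q(L,\bGamma;\mathbf 0)$ sits on the boundary of absolute convergence, so extracting the $B\log B$ term together with the secondary $B$ term (from it and from the paired archimedean integral), with exact constants, is delicate; since $L$ is fixed, its only effect is to modify the local factors at primes dividing $L\det F$, but one must check that the constant produced is genuinely $\widehat{\mathfrak{S}}_{L,\bGamma}$ as in \eqref{eq:singser} and that the additional bad-prime contributions in Step~3 remain within the $B^{5/6+\varepsilon}$ budget.
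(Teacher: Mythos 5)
Your Steps 1 and 2 follow the paper's route (the $\delta$-method applied after the substitution $\bx=\blambda+L\by$, factorisation of the exponential sums by the Chinese remainder theorem, and a Perron-type analysis of the diagonal $q$-sum, which is Proposition \ref{prop:c=0}). The genuine gap is Step 3. It is not true that the off-diagonal terms $\bc\neq\boldsymbol{0}$ can be absorbed into the error $O_\varepsilon(B^{5/6+\varepsilon})$ by square-root cancellation plus the decay of the oscillatory integral. For $\bc$ on the dual cone $F^*(\bc)=0$ there is no cancellation over square moduli: the Gauss-sum evaluation gives $\CT(q;\bc)=\phi(q)$ for $q=\square$, so $S_{q,L,\blambda}(\bc)\asymp q^{5/2}$ along squares and the sum $\sum_{q\le X}S_{q,L,\blambda}(\bc)/q^{2}$ grows \emph{linearly} in $X$. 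After pairing with the archimedean factor this produces a contribution of exact order $B$ for each such $\bc$ (summable thanks to the rapid decay of $\CJ_w(\bc)$). This phenomenon is already present in Heath-Brown's unrestricted count: the secondary constant $\sigma_1(w)$ in Theorem \ref{thm:H-Bmain} comes from precisely these dual-cone vectors, not from the diagonal term, so your attribution of the $B$-term solely to $\bc=\boldsymbol{0}$ misreads the source of the secondary term. Moreover, with the congruence condition mod $L$ a second family enters which your sketch cannot see: those $\bc$ with $-F^*(\bc)=\xi\,\square$ for some $\xi\mid L$, matched with real characters $\chi\bmod L$ for which the twisted character $\chi_\bc$ of \eqref{eq:creal} is principal; for these the $q$-sum again has a linear main term.

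Consequently one must prove an \emph{asymptotic} for the off-diagonal $q$-sums — this is exactly Proposition \ref{prop:qsumcneq0}, $\CF_{\blambda}(\bc;X)=\eta_{\blambda}(\bc)X+O_\varepsilon(|\bc|X^{\theta+\varepsilon})$ with $\theta<1$, proved by splitting into Dirichlet characters mod $L$, separating square-free and square-full moduli, and using Burgess's bound for the non-principal twists — and then assemble the resulting constants via $\CJ_w(\bc)$ into $\CK_{L,\bGamma}(w)$ as in Proposition \ref{prop:cneq0}; the theorem's constant is $\CH_{L,\bGamma}(w)=b_{L,\bGamma}(w)+\CK_{L,\bGamma}(w)$, not the diagonal contribution alone. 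Note also that even if you merely bounded these terms by $O(B)$ you would not obtain the stated error $O_\varepsilon(B^{5/6+\varepsilon})$: the linear main term must be extracted exactly. As written, your Step 3 would fail at the first $\bc$ with $F^*(\bc)=0$.
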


We shall write $$\Omega:=8L\Delta$$ which contains all ``bad primes''.

	\begin{theorem}\label{thm:mainprimitive}
		Assume that $\Omega\mid L$. There exists $\CG_{L,\bGamma}(w)\in\BC$ depending on $w,(L,\bGamma)$ such that 
		$$\CN_{\CW^o}(w,(L,\bGamma);B)=\left(\frac{1}{2}\widehat{\mathfrak{S}}_{L,\bGamma}\CI(w)+\CG_{L,\bGamma}(w)\right)B+O\left(B\exp\left(-c_0(\log B)^\frac{1}{2}\right)\right),$$ for an appropriate numerical constant $c_0>0$. The explicit expression  of $\CG_{L,\bGamma}(w)$ is given by \eqref{eq:CG}.
	\end{theorem}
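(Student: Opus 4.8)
The plan is to deduce the primitive count from the affine-cone count of Theorem~\ref{thm:maincountingint} by Möbius inversion over the greatest common divisor of the coordinates, exactly as in the passage from the first to the second formula in Theorem~\ref{thm:H-Bmain}. Writing $\bx=d\by$ with $\by$ primitive, and using that $F$ is homogeneous of degree $2$, the condition $F(\bx)=0$ becomes $F(\by)=0$, so
\begin{equation}\label{eq:mobinv}
\CN_{\CW}(w,(L,\bGamma);B)=\sum_{d\geq 1}\sum_{\substack{\by\in\BZ^3_{\operatorname{prim}},F(\by)=0\\ d\by\equiv\bGamma\bmod L}}w\!\left(\frac{d\by}{B}\right).
\end{equation}
Since we assume $\Omega\mid L$ and in particular $L\Delta\mid L$, for each residue $\bGamma\in\CW^o(\BZ/L\BZ)$ and each $d$ the inner congruence $d\by\equiv\bGamma\bmod L$ either is unsolvable in primitive $\by$, or pins down $\by$ modulo $L/\gcd(d,L)$; because $\bGamma$ lifts a point of $\CW^o$, one checks that solvability forces $\gcd(d,L)$ to be restricted to an explicit set, and the resulting condition on $\by$ is again of the form $\by\equiv\bGamma'\bmod L'$ with $L'\mid L$, so that $\Omega\mid L$ is inherited up to harmless adjustments. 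Thus \eqref{eq:mobinv} rewrites as a sum over $d$ of counting functions $\CN_{\CW^o}(w_d,(L',\bGamma'(d));B/d)$ with $w_d(\bt):=w(d\bt)$, and Möbius inversion on this relation — i.e.\ solving for the $d=1$ term — expresses $\CN_{\CW^o}(w,(L,\bGamma);B)$ as an alternating sum of the $\CN_{\CW}$'s at scales $B/d$.

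Next I would insert Theorem~\ref{thm:maincountingint} into each term. The main term $\frac12\widehat{\mathfrak S}_{L',\bGamma'(d)}\CI(w_d)(B/d)\log(B/d)$ contributes a $\log$ piece and a linear piece; the crucial point is that, after summing the Möbius-weighted $\log$ contributions over $d$, the coefficient of $B\log B$ must vanish. This is forced on general grounds — the left-hand side $\CN_{\CW^o}$ is trivially $O(B)$ since rational points on a conic of height $\le B$ number $O(B)$ — but I would also verify it directly: $\CI(w_d)=d^{-3}\CI(w)$ by scaling, $\sigma_p(L',\bGamma'(d))$ relates to $\sigma_p(L,\bGamma)$ by an explicit local factor, and the arithmetic of $\widehat{\mathfrak S}$ under the substitution $\bx\mapsto d\by$ produces precisely the identity that kills the $\log B$ term (this is the ternary analogue of $\sum_d \mu(d)\cdot(\text{density ratio})/d = \widehat{\mathfrak S}$-type cancellation seen in \cite{H-Bdelta}). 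The surviving linear term then has coefficient $\frac12\widehat{\mathfrak S}_{L,\bGamma}\CI(w)+\CG_{L,\bGamma}(w)$, where $\CG_{L,\bGamma}(w)$ collects: (i) the $-\log d$ correction from expanding $\log(B/d)$ against the main term, and (ii) the Möbius-weighted sum of the secondary constants $\CH_{L',\bGamma'(d)}(w_d)/d$. Because the bad primes all divide $L$ and $d\mapsto (L',\bGamma'(d))$ stabilizes for $d$ coprime to $\Omega$, the $d$-sum defining $\CG_{L,\bGamma}(w)$ converges absolutely (the tail is dominated by a convergent Dirichlet series in $d$), giving a well-defined complex constant; this is the content of formula~\eqref{eq:CG}.

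Finally I would handle the error terms and the tail of the $d$-sum. Truncating at $d\le D$ and using the trivial bound $\CN_{\CW^o}(w_d,\cdot;B/d)\ll B/d$ for the tail $d>D$ costs $O(B/D)$; the truncated error terms contribute $\sum_{d\le D}O_\varepsilon((B/d)^{5/6+\varepsilon})\ll_\varepsilon B^{5/6+\varepsilon}D^{1/6}$, and the truncated main/linear tails contribute $O(B(\log B)/D)$-type terms. Optimizing — or rather, since a power saving is unavailable here and we only want a savings over $B$, choosing $D=\exp(c(\log B)^{1/2})$ as in Theorem~\ref{thm:H-Bmain} — yields the stated error $O(B\exp(-c_0(\log B)^{1/2}))$; the subpolynomial shape is exactly what the second formula of Theorem~\ref{thm:H-Bmain} already exhibits, and no new ingredient beyond careful bookkeeping is needed. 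The main obstacle I anticipate is bookkeeping the dependence of $(L',\bGamma'(d))$ on $d$ and proving the convergence of the series \eqref{eq:CG} cleanly — in particular checking that the map $d\mapsto\bGamma'(d)$ only depends on $\gcd(d,L^\infty)$ and on $d\bmod L$ in a way that makes the relevant Dirichlet series Eulerian away from primes dividing $\Omega$; once that structure is in place, the vanishing of the $B\log B$ coefficient and the extraction of $\CG_{L,\bGamma}(w)$ are forced.
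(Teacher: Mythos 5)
Your outline (Möbius inversion over the common divisor, insertion of Theorem~\ref{thm:maincountingint}, PNT--quality savings) is indeed the paper's route, but the two points you defer are exactly where the content lies, and as written they are gaps. First, the hypothesis $\Omega\mid L$ together with $\bGamma\in\CW^o(\BZ/L\BZ)$ (so $\bGamma$ is primitive modulo every prime dividing $L$) forces $\gcd(d,L)=1$ in the inversion, and then $d\by\equiv\bGamma\bmod L$ is simply $\by\equiv\overline{d}\bGamma\bmod L$: the modulus stays $L$, the weight stays $w$ at scale $B/d$, and there is no $(L',\bGamma'(d))$ bookkeeping at all (nor any need for rescaled weights $w_d$, which would moreover ruin uniformity in $d$ of the error term of Theorem~\ref{thm:maincountingint}, whose implied constant depends on the weight). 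Second, and more seriously, you give no mechanism for the convergence of the Möbius-weighted sum of the secondary constants. Your claim that this sum ``converges absolutely, dominated by a convergent Dirichlet series in $d$'' is false: $\CH_{L,\overline{d}\bGamma}(w)$ does not decay in $d$, and $\sum_d\mu(d)/d$ is only conditionally convergent. The paper's proof hinges on how $\CH_{L,\overline{d}\bGamma}(w)=b_{L,\overline{d}\bGamma}(w)+\CK_{L,\overline{d}\bGamma}(w)$ varies with $d$: the invariance \eqref{eq:bflip} kills the $b$-part, and the flipping formula \eqref{eq:etaflip}, $\eta_{\overline{d}\blambda}(\chi;\bc)=\chi(d)\eta_{\blambda}(\chi;\bc)$, shows the $d$-dependence of the $\CK$-part is through real characters, so that $\sum_{d}\mu(d)\chi(d)/d\to\BL(1,\chi)^{-1}$ produces exactly the $\BL(1,\chi)^{-1}$ factors in \eqref{eq:CG}. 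Without this character structure you cannot even explain the shape of \eqref{eq:CG}, and the $\exp(-c_0(\log B)^{1/2})$ error is precisely the cost of these conditionally convergent sums.

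The quantitative bookkeeping is also off. The $B\log B$ coefficient does not vanish identically: it equals $\frac12\widehat{\mathfrak{S}}_{L,\bGamma}\CI(w)\sum_{d\le D,\,(d,L)=1}\mu(d)/d$ (using $\widehat{\mathfrak{S}}_{L,\overline{d}\bGamma}=\widehat{\mathfrak{S}}_{L,\bGamma}$), which is only $\ll\exp(-c(\log D)^{1/2})$; and the $-\log d$ correction is not a piece of $\CG_{L,\bGamma}(w)$ but is the source of the main linear term, since $\sum_{d\le D,(d,L)=1}\mu(d)\log d/d=-1+o(1)$ gives $\frac12\widehat{\mathfrak{S}}_{L,\bGamma}\CI(w)B$. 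Your tail estimate is incorrect: bounding termwise by $B/d$ over $D<d\ll B$ yields $\asymp B\log(B/D)$, not $O(B/D)$, so with your choice $D=\exp(c(\log B)^{1/2})$ the tail swamps the main term. The paper truncates at $D=\sqrt B$ (so the accumulated errors $\sum_{d\le\sqrt B}(B/d)^{5/6+\varepsilon}\ll B^{11/12+\varepsilon}$ are harmless) and handles $d>\sqrt B$ by the argument of Heath-Brown, p.~205, which exploits cancellation of $\mu(d)$ against the smooth weight after interchanging the order of summation; trivial bounds and ``careful bookkeeping'' do not suffice there.
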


	Let us now compare Theorems \ref{thm:maincountingint} \ref{thm:mainprimitive} with Heath-Brown's result (Theorem \ref{thm:H-Bmain}). 
	The common feature is that, without primitivity condition, the order of growth is $B\log B$, the leading constant being the product of local densities with singular integrals. Imposing primitivity condition, via a Möbius inversion, renders $B\log B$ to $B$ and leaves the leading constant unchanged.
	The main difference is that, the secondary term $\CH_{L,\bGamma}(w)B$ becomes $\CG_{L,\bGamma}(w) B$, and it can happen that $\CG_{L,\bGamma}(w)\neq 0$ (it does equal $0$ when no local condition is added), and that $\frac{1}{2}\widehat{\mathfrak{S}}_{L,\bGamma}\CI(w)+\CG_{L,\bGamma}(w)=0$, provided that local obstruction exists. Although we are not quite able to compute the exact value of  $\CG_{L,\bGamma}(w)$, it is expressed as an infinite sum of various exponential sums with respect to certain non-zero Poisson variables and encode the effect of $w$ and non-trivial real characters of modulus dividing $\Omega$.
	 
	 	  
\begin{example} 
	 
	 Heath-Brown \cite[p. 161]{H-Bdelta} addresses briefly the (counter-)example where there is an obstruction to the existence of global primitive integer solutions to the equation $x_1x_2=x_3^2$ with the local conditions $x_i>0$ and $x_i\equiv 2\bmod 3$ for all $i=1,2,3$. Indeed, the primitivity and positivity imply that $\gcd(x_1,x_2)=1$, and that both $x_1$ and $x_2$ are perfect squares Hence they all have the same congruence class $1\bmod 3$, contradiction. The obstruction comes from ``reciprocity'' between the real sign and the Legendre symbol modulo $3$. We shall see that this is coherent with our asymptotic formula.
	\end{example}
	
	In \S\ref{se:puncturedtoprojective}, we deduce a corresponding counting result for the projective conic $C$, based on Theorem \ref{thm:mainprimitive}. This amounts to adding up contributions from all representives of a fixed projective congruence residue. Although $C\simeq\BP^1$, and counting points on $\BP^1$, which is clearly unobstructed, can be done much more elementarily, our goal is to explain how obscured term $\CG_{L,\bGamma}(w)$ disappears on the one hand. On the other hand, we give an interpretation of the $\frac{1}{2}$-factor in Theorem \ref{thm:mainprimitive}.
	
	As a $\BG_{\operatorname{m}}$-torsor over the conic $C$,  the quasi-affine variety $W^o$ is a (smooth) toric variety. That the Brauer--Manin obstruction is the only one to strong approximation off infinity for such varieties has been studied in depth by work of Cao--Xu \cite{Cao-Xu}. We compare our counting results in view of Brauer--Manin obstruction in \S\ref{se:BM}.
		 	 
		 	 \bigskip
	Most of our notation is standard, and is adapted from \cite{PART1,PART2}.
	We write $$\e(x):=\exp\left(2\pi i x\right),\quad \e_q(x):=\exp\left(\frac{2\pi i x}{q}\right).$$ Let $$\phi(n):=\#\left(\BZ/n\BZ\right)^\times$$ denote the Euler totient function, and $\mu$ denote the Möbius function.
	
	 \section{Counting integral points with local conditions}\label{se:count}
	 \subsection{Applying the $\delta$-method}
	 	We choose a representative $\blambda\in\BZ^3$ of $\bGamma$ of size $\ll L$. 
	 We execute the $\delta$-method as in \cite[\S2.4]{PART1} and get 
	 \begin{equation}\label{eq:Nwdelta}
	 	\begin{split}
	 		\CN_{\CW}(w,(L,\bGamma);B)=\frac{C_Q}{Q^2}\left(\frac{B}{L^2}\right)^3\sum_{q=1}^{\infty}\sum_{\bc\in\BZ^{3}}\frac{\e_{qL^2}(\bc\cdot\blambda)S_{q,L,\blambda}(\bc)\widehat{\CI}_{\frac{q}{Q}}\left(w;\frac{\bc}{L}\right)}{q^{3}}, 	\end{split}
	 \end{equation} where $C_Q=1+O_N(Q^{-N})$ and for every $q\ll Q, \bc\in\BZ^3$,
	 we let \begin{equation}\label{eq:Sqc}
	 	S_{q,L,\blambda}(\bc):=\sum_{\substack{a\bmod q\\(a,q)=1}}\sum_{\substack{\bsigma\in(\BZ/qL\BZ)^3\\  L^2\mid F(L\bsigma+\blambda_N)}}\textup{e}_{qL}\left(a\left(\frac{F(L\boldsymbol{\sigma}+\blambda_N)}{L}\right)+\bc\cdot\bsigma\right),
	 \end{equation}
	 and for any $\bb\in\BR^3$ and $r\in\BR_{>0}$ we define
	 \begin{equation}\label{eq:JqL}
	 	\CI_{r}(w;\bb):=\int_{\BR^3}w(\bt)h\left(r,F(\bt)\right)\e_r\left(-\bb\cdot\bt\right)\operatorname{d}\bt.
	 \end{equation}
	 
	Write \begin{equation}\label{eq:qdecomp}
		q=q_1q_2
	\end{equation} such that $$\gcd(q_1,\Omega)=1,\quad q_2\mid \Omega^\infty.$$ Write $$\frac{F(\blambda)}{L}=k_2q_1+k_1q_2L,$$ with $k_1\bmod q_1$ and $k_2\bmod q_2L$.
	 	By the Chinese remainder theorem as in \cite[Lemma 4.2]{PART1}, we have	$$S_{q,L,\blambda}(\bc)=S^{(1)}_{q,L,\blambda}(\bc)S^{(2)}_{q,L,\blambda}(\bc),$$ where $S^{(1)}_{q,L,\blambda}(\bc)$ (resp. $S^{(2)}_{q,L,\blambda}(\bc)$) is a quadratic exponential sum modulo integers coprime to $\Omega$ (resp. whose prime factors all dividing $\Omega$).

	 By \cite[Proposition 4.3]{PART1}, we have 
	 $$S^{(1)}_{q,L,\blambda}(\bc)=\e_{q_1}\left(-\overline{q_2L^2}\bc\cdot\blambda\right)\iota_{q_1}^{3}\CT(q_1;\bc),$$ where for every $x\in\BN$, we write $\iota_{x}$ for the quadratic Gauss sum of modulus $x$ if $x$ is odd (cf. \cite[Lemma 4.3]{PART1}), and we let \begin{equation}\label{eq:CT}
	 	\CT(x;\bc):=\sum_{\substack{a\bmod x\\(a,x)=1}}\left(\frac{a}{x}\right)\e_{x}\left(-aF^*(\bc)\right),
	 \end{equation}	 $F^*$ standing for the dual of $F$.
	  
	  As for $S^{(2)}$, we have $$S^{(2)}_{q,L,\blambda}(\bc)= \e_{q_2L^2}\left(-\overline{q_1}\bc\cdot\blambda\right)\CS_{q_2,L,\blambda}(q_1;\bc),$$ where for any $x\in\BN$ such that $\gcd(x,L)=1$, we let $$\CS_{q_2,L,\blambda}(x;\bc):=\sum_{\substack{a_2\bmod q_2\\(a_2,q)=1}}\sum_{\substack{\balpha\in (\BZ/q_2L^2\BZ)^3\\F(\balpha)\equiv 0\bmod L^2\\ \balpha\equiv \overline{x}\blambda\bmod L}}\e_{q_2L^2}\left(a_2 F(\balpha)+\bc\cdot\balpha\right).$$
	 Moreover by \cite[Proposition 4.6]{PART1} \begin{equation}\label{eq:S2bd}
	 	S^{(2)}_{q,L,\blambda}(\bc)\ll q_2^\frac{5}{2}.
	 \end{equation}
	 
	 \subsection{Contribution from $\bc\neq\boldsymbol{0}$}
	 We next derive the contribution from non-zero Poisson variables $\bc$ in \eqref{eq:Nwdelta}.
	 	 \begin{proposition}\label{prop:cneq0}
There exists $\CK_{L,\bGamma}(w)\in\BC$ and $0<\iota<1$ such that	$$\sum_{q=1}^{\infty}\sum_{\bc\in\BZ^{3}\setminus\boldsymbol{0}}\frac{\e_{qL^2}(\bc\cdot\blambda)S_{q,L,\blambda}(\bc)\CI_{\frac{q}{Q}}\left(w;\frac{\bc}{L}\right)}{q^{3}}=L^4\CK_{L,\bGamma}(w)+O_\varepsilon(B^{-\iota+\varepsilon}).$$
	 The expression of $\CK_{L,\bGamma}(w)$ is given by \eqref{eq:CK}.
	 \end{proposition}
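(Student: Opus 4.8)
The plan is as follows. Substituting the Chinese-remainder factorisation $S_{q,L,\blambda}(\bc)=S^{(1)}_{q,L,\blambda}(\bc)\,S^{(2)}_{q,L,\blambda}(\bc)$ recorded above, the two linear phases $\e_{q_1}(-\overline{q_2L^2}\,\bc\cdot\blambda)$ and $\e_{q_2L^2}(-\overline{q_1}\,\bc\cdot\blambda)$ arising from $S^{(1)}$ and $S^{(2)}$ cancel the factor $\e_{qL^2}(\bc\cdot\blambda)$ exactly, by the Chinese remainder theorem, so that (writing $q=q_1q_2$ as in \eqref{eq:qdecomp}) the sum to be estimated is
\[
\sum_{q_2\mid\Omega^\infty}\ \sum_{\substack{q_1\geq 1\\ \gcd(q_1,\Omega)=1}}\ \sum_{\bc\in\BZ^3\setminus\boldsymbol 0}\frac{\iota_{q_1}^3\,\CT(q_1;\bc)\,\CS_{q_2,L,\blambda}(q_1;\bc)}{(q_1q_2)^3}\,\CI_{\frac{q_1q_2}{Q}}\!\left(w;\frac{\bc}{L}\right),
\]
with $Q\asymp B$. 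The goal is to prove that this converges, as $B\to\infty$, to a constant — which, after inserting the power of $L$ for later convenience, is $L^4\CK_{L,\bGamma}(w)$ — with a power-saving remainder $O_\varepsilon(B^{-\iota+\varepsilon})$.

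The first step is to analyse the oscillatory integral $\CI_r(w;\bc/L)$ of \eqref{eq:JqL}. Using the support and size properties of Heath-Brown's weight $h(r,\cdot)$ — in particular $h(r,x)=0$ once $r$ exceeds $2\max(1,|x|)$, which (as $F(\bt)$ is bounded on the support of $w$) already forces $q\ll Q$ — together with repeated integration by parts in $\bt$ (legitimate since $w$ is smooth and compactly supported and each $\bt$-derivative of $h(r,F(\bt))$ costs only $O(r^{-1})$), one obtains: (i) rapid decay $\CI_r(w;\bc/L)\ll_N (rL/|\bc|)^N$ for every $N$ whenever $\bc/L$ stays away from the compact set of conormal directions of the real cone $\{F=0\}$ along the support of $w$, equivalently whenever $F^*(\bc)\neq 0$; (ii) for $F^*(\bc)=0$, a stationary phase analysis along the dual conic — degenerate, because $\{F=0\}$ is ruled, so the phase $\bc\cdot\bt/(Lr)$ vanishes identically along a ruling and the transverse Hessian has rank one — gives $\CI_r(w;\bc/L)\asymp (rL/|\bc|)^{1/2}$; (iii) in all cases $\CI_r(w;\bc/L)\to 0$ as $r\to 0$. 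Taking $N$ of size about $\log B$ in (i) kills, up to $O(B^{-A})$, the contribution of every $\bc$ with $F^*(\bc)\neq 0$ and $|\bc|\gg_F L$; the remaining such $\bc$ form a finite set (for $L$ fixed), and for each of them the bounds $|\iota_{q_1}^3\CT(q_1;\bc)|\ll_{F,L}q_1^2$ (using that $\gcd(q_1,F^*(\bc))$ is bounded), $|\CS_{q_2,L,\blambda}(q_1;\bc)|\ll q_2^{5/2}$ from \eqref{eq:S2bd}, and the convergence of $\sum_{q_2\mid\Omega^\infty}q_2^{-1/2}$, make the corresponding $(q_1,q_2)$-sum absolutely convergent, uniformly in $B$; by (iii) and partial summation in $q=q_1q_2$ it converges, as $Q\to\infty$, to an explicit constant, the discrepancy being $O_\varepsilon(B^{-\iota+\varepsilon})$.

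The essential difficulty lies with the $\bc$ on the cone over the dual conic, i.e.\ with $F^*(\bc)=0$. Here the arithmetic gives a first saving: when $F^*(\bc)=0$ the sum $\CT(q_1;\bc)=\sum_{a\bmod q_1,\,(a,q_1)=1}\left(\frac{a}{q_1}\right)$ vanishes unless every prime dividing $q_1$ occurs in $q_1$ to an even power, so only perfect squares $q_1=m^2$ survive and one is reduced to a sum over $m$, over $q_2\mid\Omega^\infty$, and over the integral points of the dual cone, which I would parametrise by the rational points of the dual conic. However, the pointwise estimate (ii) only yields decay $\asymp(rL/|\bc|)^{1/2}$, and $\sum_{\bc:\,F^*(\bc)=0}|\bc|^{-1/2}$ diverges over the $\asymp X\log X$ lattice points with $|\bc|\le X$; hence the sum over such $\bc$ cannot be controlled term by term, and one must extract genuine cancellation from the complete exponential sums $\CS_{q_2,L,\blambda}(q_1;\bc)$ as $\bc$ ranges over the dual cone — equivalently, cancellation in $\sum_{\bc}\e_{q_2L^2}(\bc\cdot\balpha)$ restricted to $F^*(\bc)=0$, played off against the oscillation of $\CI$ — or else a refined, second-order stationary-phase expansion in which the $\bc$-sum is resummed before estimating. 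This is the heart of the argument, and the admissible exponent $0<\iota<1$ is whatever survives from balancing these savings against the analytic truncation losses.

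Assembling the two contributions identifies $\CK_{L,\bGamma}(w)$ with the $Q\to\infty$ limit of the triple sum — an explicit, absolutely convergent multiple series of quadratic exponential sums weighted by the limiting values of the integrals $\CI$, which is the expression \eqref{eq:CK} — and collecting all the truncation and stationary-phase remainders into $O_\varepsilon(B^{-\iota+\varepsilon})$ completes the proof.
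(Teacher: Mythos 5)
Your proposal diverges from the paper's argument at the decisive point and leaves a gap there. The paper does \emph{not} distinguish $F^*(\bc)=0$ from $F^*(\bc)\neq 0$ at the level of the integral: since the $\bt$-phase in \eqref{eq:JqL} is \emph{linear} (the factor $h(r,F(\bt))$ is only an amplitude whose $\bt$-derivatives cost $O(r^{-1})$ each), repeated integration by parts gives rapid decay in $|\bc|$ for \emph{every} $\bc\neq\boldsymbol{0}$, and this is exactly the bound $\CJ_w(\bc)\ll_N|\bc|^{-N}$ quoted from \cite[Proof of Theorem 5.2]{HuangTernary}. Your claim (ii), that on the dual cone one only has $\CI_r(w;\bc/L)\asymp (rL/|\bc|)^{1/2}$, conflates this linear-phase integral with an oscillatory integral with phase $\theta F(\bt)-\bc\cdot\bt$; it creates a divergence problem over $\{F^*(\bc)=0\}$ that the paper never faces, and you then explicitly defer its resolution (``extract genuine cancellation over $\bc$ \dots this is the heart of the argument'') without carrying it out. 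A proof that names its own central step and does not perform it is incomplete precisely there.

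The second structural problem is your treatment of $\bc$ with $F^*(\bc)\neq 0$. It is not true that only the finitely many $\bc\ll_F L$ matter: infinitely many such $\bc$ contribute to the limiting constant, namely those with $-F^*(\bc)=\xi\,\square$ for a real character $\chi=(\xi/\cdot)$ as in \eqref{eq:creal}; these produce the second family of terms in \eqref{eq:CK}, which your scheme would omit, so even the shape of $\CK_{L,\bGamma}(w)$ comes out wrong. Moreover, for the remaining $\bc$ with $F^*(\bc)\neq0$ your crude bounds $\iota_{q_1}^3\CT(q_1;\bc)\ll q_1^2$ and \eqref{eq:S2bd} give only $\sum_{q_1}q_1^{-1}$, which diverges logarithmically against the truncation $q\ll Q$: absolute convergence fails, and one genuinely needs cancellation in the $q_1$-sum, uniformly in $\bc$. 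This is what the paper supplies in Proposition \ref{prop:qsumcneq0}: after separating the $q_1$-dependence of $\CS_{q_2,L,\blambda}(q_1;\bc)$ by Dirichlet characters mod $L$ and evaluating $\CT$ via Gauss sums, the non-principal twisted characters $\chi_{\bc}$ are handled by Burgess's bound (modulus $\ll|\bc|^2$, hence the $|\bc|$-uniform error $O_\varepsilon(|\bc|X^{7/8+\varepsilon})$), while the principal case \eqref{eq:creal} and the case $F^*(\bc)=0$ yield the linear main term $\eta_{\blambda}(\bc)X$. The proposition is then proved by partial summation of this asymptotic against $\CI_{q/Q}(w;\bc/L)$, converting $\eta_{\blambda}(\bc)X$ into $\eta_{\blambda}(\bc)\CJ_w(\bc)$ and summing over $\bc$ using the rapid decay of $\CJ_w$. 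Your proposal contains neither the character decomposition nor any substitute for the Burgess input, so the claimed power saving $O_\varepsilon(B^{-\iota+\varepsilon})$ is not substantiated.
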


	 \subsubsection{}
For every $\bc\neq\boldsymbol{0}$ and $X\gg 1$, we now let
	 \begin{equation}
	 		\CF_{\blambda}(\bc;X):=\sum_{\substack{q\leqslant X}}\frac{\e_{qL^2}(\bc\cdot\blambda)S_{q,L,\blambda}(\bc)}{q^2}.
	 \end{equation}
	 \begin{proposition}\label{prop:qsumcneq0}
	We have 	$$\CF_{\blambda}(\bc;X)=\eta_{\blambda}(\bc)X+O_\varepsilon(|\bc|X^{\frac{1}{2}+\varepsilon}),$$ where $\eta_{\blambda}(\bc)$ is defined in \eqref{eq:eta1} and \eqref{eq:eta2} and satisfies $\eta_{\blambda}(\bc)\ll_\varepsilon |\bc|^\varepsilon$.
	 \end{proposition}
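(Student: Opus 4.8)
The goal is to evaluate the Dirichlet-type sum $\CF_{\blambda}(\bc;X)=\sum_{q\le X}q^{-2}\e_{qL^2}(\bc\cdot\blambda)S_{q,L,\blambda}(\bc)$ for fixed non-zero $\bc$, showing it grows linearly in $X$ with an error saving a power of $X$. The natural strategy is to exploit the multiplicative structure already laid out in the excerpt: write $q=q_1q_2$ with $\gcd(q_1,\Omega)=1$ and $q_2\mid\Omega^\infty$, and use the factorization $S_{q,L,\blambda}(\bc)=S^{(1)}_{q,L,\blambda}(\bc)S^{(2)}_{q,L,\blambda}(\bc)$ together with the evaluations $S^{(1)}_{q,L,\blambda}(\bc)=\e_{q_1}(-\overline{q_2L^2}\,\bc\cdot\blambda)\iota_{q_1}^3\CT(q_1;\bc)$ and $S^{(2)}_{q,L,\blambda}(\bc)=\e_{q_2L^2}(-\overline{q_1}\,\bc\cdot\blambda)\CS_{q_2,L,\blambda}(q_1;\bc)$. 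After inserting these, the phase $\e_{qL^2}(\bc\cdot\blambda)$ cancels against the phases pulled out of $S^{(1)}$ and $S^{(2)}$ (this is the standard CRT bookkeeping: $\overline{q_2L^2}/q_1 + \overline{q_1}/(q_2L^2)\equiv 1/(q_1q_2L^2)\bmod 1$), so $\CF_{\blambda}(\bc;X)$ becomes a sum over $q_1,q_2$ of a product of a ``generic'' factor built from $\iota_{q_1}^3\CT(q_1;\bc)/q_1^2$ and a ``bad'' factor built from $\CS_{q_2,L,\blambda}(q_1;\bc)/q_2^2$.

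First I would treat the $q_1$-sum. Since $\iota_{q_1}^3$ contributes a factor of size $q_1^{3/2}$ times a quartic character in $q_1$, and $\CT(q_1;\bc)$ is a Salié-type / Gauss-sum-twisted Kloosterman sum in $F^*(\bc)$, the ratio $\iota_{q_1}^3\CT(q_1;\bc)/q_1^2$ behaves on average like a multiplicative function whose Dirichlet series factors through an $L$-function attached to the quadratic character $\chi_{F^*(\bc)}=\left(\frac{F^*(\bc)}{\cdot}\right)$ (or a product of such, depending on whether $F^*(\bc)=0$); this is exactly the phenomenon already exploited in \cite{PART1} for the $\bc\neq\boldsymbol 0$ analysis without congruence conditions. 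Using the estimates from \cite[Proposition 4.3, Lemma 4.3]{PART1} for $\CT$ and Gauss sums, and partial summation / a Perron-type argument against the relevant $L$-function, the inner $q_1$-sum up to $X/q_2$ equals (main term linear in $X/q_2$) $+\,O_\varepsilon(|\bc|^\varepsilon (X/q_2)^{1/2+\varepsilon})$, the main-term coefficient being a local product over primes $p\nmid\Omega$.

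Next, the $q_2$-sum is purely a finite-type problem: by \eqref{eq:S2bd} we have $S^{(2)}_{q,L,\blambda}(\bc)\ll q_2^{5/2}$, which is far too weak if used naively, so I would instead argue that $q_2\mapsto \CS_{q_2,L,\blambda}(q_1;\bc)/q_2^2$ is (eventually) supported on bounded $q_2$ or decays: for $p\mid\Omega$ and $k$ large the $p$-adic sum $\CS_{p^k,\ldots}$ stabilizes/vanishes once $p^k$ exceeds a bound depending on $\mathrm{ord}_p(\Omega)$, $\mathrm{ord}_p(2F^*(\bc))$, and $L$, so $\sum_{q_2\mid\Omega^\infty}\CS_{q_2,L,\blambda}(q_1;\bc)/q_2^2$ is a convergent product over $p\mid\Omega$ of finite local sums, each $\ll_\varepsilon |\bc|^\varepsilon$, and truncating at $q_2\le X$ costs only $O_\varepsilon(|\bc| X^{-\iota'})$ for some $\iota'>0$. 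Combining, the cross term between the two truncation errors and the tail of the $q_2$-product gives the claimed shape $\CF_{\blambda}(\bc;X)=\eta_{\blambda}(\bc)X+O_\varepsilon(|\bc|X^{1/2+\varepsilon})$, with $\eta_{\blambda}(\bc)$ defined as the product of the $q_1$-local main-term constant (formulas \eqref{eq:eta1}) and the $q_2$-local product (formula \eqref{eq:eta2}); the bound $\eta_{\blambda}(\bc)\ll_\varepsilon|\bc|^\varepsilon$ follows from the convexity/subconvexity-free estimate $L(1,\chi_{F^*(\bc)})\ll |\bc|^\varepsilon$ for the generic part and from the crude bound on the finitely many bad local factors.

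The main obstacle will be the $q_1$-sum when $F^*(\bc)=0$, i.e. when $\bc$ lies on the dual conic: then $\CT(q_1;\bc)$ degenerates to a Ramanujan-type sum and the associated Dirichlet series has a pole (or a different polar structure), so the main term $\eta_{\blambda}(\bc)X$ genuinely appears and one must carefully separate the polar contribution from the error — this is precisely where the $X$ (rather than $O(X^{1/2+\varepsilon})$) comes from, and the non-vanishing/size of $\eta_{\blambda}(\bc)$ on this locus is what ultimately feeds the obscured secondary term $\CG_{L,\bGamma}(w)$ of Theorem~\ref{thm:mainprimitive}. Keeping the dependence on $|\bc|$ polynomial (indeed $|\bc|^{1+\varepsilon}$ in the error, $|\bc|^\varepsilon$ in $\eta_{\blambda}$) throughout both the generic and degenerate cases, uniformly in $q_2$, is the delicate bookkeeping; everything else is an adaptation of the $\delta$-method Gauss-sum estimates of \cite[\S4]{PART1} with the extra congruence datum $(L,\blambda)$ carried along inside the $q_2$-factor.
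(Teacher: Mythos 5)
Your outline misses the step that actually drives the paper's proof: decoupling the $q_1$- and $q_2$-sums. Since the bad-modulus factor $\CS_{q_2,L,\blambda}(q_1;\bc)$ depends on $q_1$ through $\overline{q_1}\bmod L$, the sum over $q=q_1q_2$ is \emph{not} a product of an independent ``generic'' sum and a ``bad'' sum, and your proposed shape of $\eta_{\blambda}(\bc)$ as ``($q_1$-local constant)$\times$($q_2$-local product)'' is not what comes out (nor is it what \eqref{eq:eta1}--\eqref{eq:eta2} say: those are the two cases $F^*(\bc)=0$ and $F^*(\bc)\neq 0$, not a generic/bad factorization). The paper's first move is to expand the $q_1\bmod L$ dependence in Dirichlet characters, defining $\CA_{q_2,L,\blambda}(\chi;\bc)$ as in \eqref{eq:CAchic}, so that $\CF_{\blambda}(\bc;X)=\sum_{q_2\mid\Omega^\infty}q_2^{-2}\sum_{\chi\bmod L}\CA_{q_2,L,\blambda}(\chi;\bc)\,\CU_{\bc,\chi}(X/q_2)$. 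Only special characters produce a linear term, so $\eta_{\blambda}(\bc)$ is a \emph{sum over real characters} of products of an $\Omega$-adic factor $\sum_u\CA_{u,L,\blambda}(\chi;\bc)u^{-3}$ with a generic density; this character structure is not cosmetic --- it is what later gives the flipping identity \eqref{eq:etaflip} and the $\BL(1,\chi)^{-1}$ factors in \eqref{eq:CG}. Without this decomposition your argument has no way to say which part of the $q_1$-sum has a pole, because the answer depends on $\chi$.

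The second genuine error is your localization of the main term: you assert the linear term appears (only) on the locus $F^*(\bc)=0$. In fact, for $F^*(\bc)\neq 0$ the squarefree part of the $q_1$-sum is governed by the twisted character $\chi_{\bc}=\chi\cdot\left(\frac{-F^*(\bc)}{\cdot}\right)$, which is principal exactly when \eqref{eq:creal} holds, i.e.\ $-F^*(\bc)=\xi_{\bc}\square$ with $\xi_{\bc}\mid L$; in that case the $q_1$-sum again contributes a term linear in $X/q_2$ (the paper's $\theta_{\chi}(\bc)$), and these $\bc$ enter $\eta_{\blambda}(\bc)$ through \eqref{eq:eta2}. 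Conversely, when $F^*(\bc)=0$ only the real $\chi$ survive. Omitting the $-F^*(\bc)=\xi\square$ contributions yields the wrong $\eta_{\blambda}(\bc)$, hence the wrong $\CK_{L,\bGamma}(w)$ and $\CG_{L,\bGamma}(w)$. Two smaller points: uniformity in $\bc$ is obtained in the paper by the square-free/square-full splitting of $q_1$, the explicit evaluation of $\CT$, and Burgess's bound for characters of conductor $\Omega|F^*(\bc)|\ll|\bc|^2$ --- a bare appeal to ``partial summation/Perron'' does not deliver a power saving uniform in $|\bc|$; and your claim that the bound \eqref{eq:S2bd} is ``far too weak if used naively'' is mistaken: $\CA_{q_2,L,\blambda}(\chi;\bc)\ll q_2^{5/2}$ used naively makes the $q_2$-sum in the main term absolutely convergent and the accumulated error admissible, which is exactly how the paper argues, so no (unproved) stabilization or vanishing of $\CS_{p^k,L,\blambda}$ is needed.
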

	 \begin{proof}[Proof of Proposition \ref{prop:qsumcneq0}]
	 	For each fixed $q$ and $\bc\in\BZ^3$, for every Dirichlet character $\chi\bmod L$ let  \begin{equation}\label{eq:CAchic}
	 		\CA_{q_2,L,\blambda}(\chi;\bc):=\frac{1}{\phi(L)}\sum_{x\bmod L}\chi(x)^c\CS_{q_2,L,\blambda}(x;\bc).
	 	\end{equation} 
	 	Then $$\CS_{q_2,L,\blambda}(q_1;\bc)=\sum_{\chi\bmod L}\chi(q_1)\CA_{q_2,L,\blambda}(\chi;\bc).$$
	 	So we can decompose 
	 	$$\CF_{\blambda}(\bc;X)=\sum_{\substack{q_2\leqslant X\\ q_2\mid\Omega^\infty}}\frac{1}{q_2^2}\sum_{\chi\bmod L}\CA_{q_2,L,\blambda}(\chi;\bc)\CU_{\bc,\chi}\left(\frac{X}{q_2}\right),$$ where
	 	$$\CU_{\bc,\chi}\left(Y\right):=\sum_{\substack{x\leqslant Y, (x,\Omega)=1}}\frac{\chi(x)\CT(x;\bc)}{\iota_{x}},$$ $\CT(x;\bc)$ being defined by \eqref{eq:CT}.
	 	
	 	\emph{Case where $F^*(\bc)=0$.} Then
	 	$$\CT(x;\bc)=\sum_{a\bmod x}\left(\frac{a}{x}\right)=\begin{cases}
	 		\phi(x) & \text{ if } x=\square;\\ 0 &\text{ otherwise}.
	 	\end{cases}$$
	 	So in this case ($\iota_{q_1}=\sqrt{q_1}$ if $q_1=\square$ and is odd by \cite[(3.38)]{Iwaniec-Kolwalski})
	 	$$\CU_{\bc,\chi}\left(Y\right)=\sum_{\substack{q_1\leqslant Y\\q_1=\square, (q_1,\Omega)=1}}\frac{\chi(q_1)\phi(q_1)}{\sqrt{q_1}}=\sum_{\substack{r\leqslant\sqrt{Y}\\(r,\Omega)=1}}\frac{\phi(r^2)\chi^2(r)}{r}.$$
	 	Assuming that $\chi$ is non-real, i.e., $\chi^2$ is non-principal, we have, by the trivial bound for character sums (view $\chi$ as modulo $\Omega$) coupled with partial summation,
	 	\begin{align*}
	 		\CU_{\bc,\chi}\left(Y\right)&=\sum_{\substack{r\leqslant \sqrt{Y}}}\chi^2(r)r\sum_{d\mid r}\frac{\mu(d)}{d}\\ &=\sum_{\substack{e_1\leqslant\sqrt{Y}}}\mu(e_1)\chi^2(e_1)\sum_{e_2\leqslant\frac{\sqrt{Y}}{e_1}}\chi^2(e_2)e_2\ll_\varepsilon Y^{\frac{1}{2}+\varepsilon}.
	 	\end{align*}
	 	Now assume that $\chi$ is real. Then by the well-known asymptotic for $\phi$, we have
	 	\begin{align*}
	 		\CU_{\bc,\chi}\left(Y\right)&=\sum_{\substack{r\leqslant \sqrt{Y}\\ (r,\Omega)=1}}\phi(r)=\frac{3}{\pi^2}\prod_{p\mid \Omega}\left(1-\frac{1}{p}\right)Y+O_\varepsilon(Y^{\frac{1}{2}+\varepsilon}).
	 	\end{align*}
	 	
	 	We now go back to $\CF_{\blambda}(\bc;X)$. Note that \eqref{eq:S2bd} implies that $$\CA_{q_2,L,\blambda}(\chi;\bc)\ll q_2^\frac{5}{2}.$$
	 	We obtain that 
	 	\begin{align*}
	 		\CF_{\blambda}(\bc;X)&=\frac{3}{\pi^2}\prod_{p\mid \Omega}\left(1-\frac{1}{p}\right)X\sum_{\substack{q_2\leqslant X\\ q_2\mid\Omega^\infty}}\sum_{\substack{\chi\bmod L\\ \chi \text{ real}}}\frac{\CA_{q_2,L,\blambda}(\chi;\bc)}{q_2^3}+O_\varepsilon(X^{\frac{1}{2}+\varepsilon})\\ &=\eta_{\blambda}(\bc)X+O_\varepsilon(X^{\frac{1}{2}+\varepsilon}),
	 	\end{align*}
 where \begin{equation}\label{eq:eta1}
	 		\eta_{\blambda}(\bc):=\sum_{\substack{\chi\bmod L\\ \chi \text{ real}}}\eta_{\blambda}(\chi;\bc)
	 	\end{equation} and for $F^*(\bc)=0$ and for real $\chi\bmod L$ we define
	 	\begin{equation}\label{eq:etachic1}
	 		\eta_{\blambda}(\chi;\bc):=\frac{3}{\pi^2}\prod_{p\mid \Omega}\left(1-\frac{1}{p}\right)\sum_{u\mid \Omega^\infty}\frac{\CA_{u,L,\blambda}(\chi;\bc)}{u^3}.
	 	\end{equation}
	 	Clearly $$\eta_{\blambda}(\bc)\ll 1.$$

	\emph{Case where $F^*(\bc)\neq 0$.} 
	For any odd integer $q$, we factorize $q=q'q''$ where $q'$ is square-free, $q''$ is square-full and $\gcd(q',q'')=1$. Then we have (see e.g. \cite[(3.28) (3.38)]{Iwaniec-Kolwalski}) $$\iota_q=\left(\frac{q''}{q'}\right)\left(\frac{q'}{q''}\right)\iota_{q'}\iota_{q''},$$ since $q$ is odd.
	Now by e.g. \cite[(3.12)]{Iwaniec-Kolwalski}, \begin{align*}
		\CT(q;\bc) &=\sum_{a'\bmod q'}^{*}\sum_{a''\bmod q''}^{*}\left(\frac{a'}{q'}\right)\left(\frac{a''}{q''}\right)\e_{q'}\left(-\overline{q''}a'F^*(\bc)\right)\e_{q''}\left(-\overline{q'}a''F^*(\bc)\right)\\ &=\left(\frac{q''}{q'}\right)\left(\frac{q'}{q''}\right)\CT(q';\bc)\CT(q'';\bc)\\ &=\left(\frac{q''}{q'}\right)\left(\frac{q'}{q''}\right)\left(\frac{-F^*(\bc)}{q'}\right)\iota_{q'}\CT(q'';\bc)\\ &=\iota_q\left(\frac{-F^*(\bc)}{q'}\right)\frac{\CT(q'';\bc)}{\iota_{q''}}.
	\end{align*}  

It follows that
\begin{align*}
	\CU_{\bc,\chi}\left(Y\right)&=\sum_{\substack{q''\leqslant Y,\square-\text{full}\\ \gcd(q'',\Omega)=1}}\chi(q'')\frac{\CT(q'';\bc)}{\iota_{q''}}\sum_{\substack{q'\leqslant\frac{Y}{q''},\square-\text{free}\\ \gcd(q',q''\Omega)=1}}\chi(q')\left(\frac{-F^*(\bc)}{q'}\right).
\end{align*}
Let $$\chi_{\bc}(\cdot):=\chi(\cdot)\left(\frac{-F^*(\bc)}{\cdot}\right),$$ viewed as a character of modulus $\Omega |F^*(\bc)|\ll |\bc|^2$. We note that $\chi_{\bc}$ is principal if and only if 
\begin{equation}\label{eq:creal}
	\text{There exists an integer } \xi_{\bc}\mid L \text{ such that } \chi(\cdot)=\left(\frac{\xi_{\bc}}{\cdot}\right) \text{ and } -F^*(\bc)=\xi_{\bc}\square.
\end{equation}

If $\chi_{\bc}$ is non-principal, then using the identity $\mu^2(n)=\sum_{d^2\mid n}\mu(d)$ and by Burgess's $\frac{3}{16}$-bound \cite{Burgess}, the inner sum is \begin{align*}
	&=\sum_{d''\mid q''}\mu(d'')\chi_{\bc}(d'')\sum_{\substack{q'\leqslant \frac{Y}{q''d''}}}\sum_{d^{\prime 2}\mid q'}\mu(d')\chi_{\bc}(q')\\
	&=\sum_{d''\mid q''}\mu(d'')\chi_{\bc}(d'')\sum_{d'\leqslant\left(\frac{Y}{q''d''}\right)^\frac{1}{2}}\mu(d')\chi_{\bc}(d'^2)\sum_{\substack{q'\leqslant \frac{Y}{q''d''d'^2}}}\chi_{\bc}(q')\\
&\ll_\varepsilon |\bc|^{\frac{3}{8}+\varepsilon}\left(\frac{Y}{q''}\right)^{\frac{1}{2}+\varepsilon}\tau(q'')\sum_{d'\leqslant \left(\frac{Y}{q''}\right)^\frac{1}{2}}\frac{1}{d'}\ll_\varepsilon |\bc|^{\frac{3}{8}+\varepsilon}\left(\frac{Y}{q''}\right)^{\frac{1}{2}+\varepsilon}.
\end{align*}
Using explicit evaluation of Gauss sums (see e.g. \cite[Lemma 3.2]{Iwaniec-Kolwalski} or \cite[Proof of Corollary 4.5]{PART1}), we have \begin{equation}\label{eq:CTqbd}
	\CT(q'';\bc)\ll_\varepsilon |\bc| q^{''\frac{1}{2}+\varepsilon}.
\end{equation} Hence 
\begin{align*}
	\CU_{\bc,\chi}\left(Y\right)\ll_\varepsilon&|\bc|^{\frac{11}{8}+\varepsilon}Y^{\frac{1}{2}+\varepsilon}\sum_{\substack{q''\leqslant Y,\square-\text{full}}}q^{-\frac{1}{2}+\varepsilon}\ll_\varepsilon  |\bc|^{\frac{
			11}{8}+\varepsilon} Y^{\frac{7}{8}+\varepsilon}.
\end{align*}

If $\chi_{\bc}$ is principal, by the well-known asymptotic for square-free integers, the inner sum equals $$\frac{Y}{q''}\frac{6}{\pi^2}\prod_{p\mid q''\Omega F^*(\bc)}\left(1-\frac{1}{p}\right)+O_\varepsilon\left(Y^{\frac{1}{2}+\varepsilon}q^{-\frac{1}{2}+\varepsilon}|\bc|^\varepsilon\right).$$ Hence, using again \eqref{eq:CTqbd}, 
\begin{align*}
	\CU_{\bc,\chi}\left(Y\right)&=Y\frac{6}{\pi^2}\sum_{\substack{q''\leqslant Y,\square-\text{full}}}\frac{\chi(q'')\CT(q'';\bc)}{q''\iota_{q''}}\prod_{p\mid q''\Omega F^*(\bc)}\left(1-\frac{1}{p}\right)+O_\varepsilon\left(|\bc|^\varepsilon Y^{\frac{7}{8}+\varepsilon}\right)\\ &=Y\theta_{\chi}(\bc)+O_\varepsilon\left(|\bc| Y^{\frac{7}{8}+\varepsilon}\right).
\end{align*}
 where we define, for $\chi,\bc$ satisfying \eqref{eq:creal}, \begin{equation}
	\theta_{\chi}(\bc):=\frac{6}{\pi^2}\sum_{\substack{x~\square-\text{full}\\\gcd(x,\Omega)=1}}\frac{\chi(x)\CT(x;\bc)}{x\iota_{x}}\prod_{p\mid x\Omega F^*(\bc)}\left(1-\frac{1}{p}\right),
\end{equation}
and we have $$\theta_{\chi}(\bc)\ll\sum_{\substack{x~\square-\text{full}}}\frac{\tau(x)x^\frac{1}{2}\gcd(x,F^*(\bc))^\frac{1}{2}}{x^\frac{3}{2}}\ll_\varepsilon |\bc|^\varepsilon.$$

Going back to $\CF_{\blambda}(\bc;X)$ we have
	 	$$\CF_{\blambda}(\bc;X)=\eta_{\blambda}(\bc)X+O_\varepsilon(|\bc|X^{\frac{7}{8}+\varepsilon}),$$ where for $F^*(\bc)\neq 0$ we define
\begin{equation}\label{eq:eta2}
			\eta_{\blambda}(\bc):=\sum_{\substack{\chi\bmod L\\ \eqref{eq:creal}\text{ holds}}}\eta_{\blambda}(\chi;\bc),
\end{equation}
where \begin{equation}\label{eq:etachic2}
	\eta_{\blambda}(\chi;\bc):=\sum_{u\mid \Omega^\infty}\frac{\CA_{u,L,\blambda}\left(\chi;\bc\right)\theta_{\chi}(\bc)}{u^3}.
\end{equation}
Clearly  \begin{equation*}
	\eta_{\blambda}(\bc)\ll_\varepsilon |\bc|^\varepsilon.\qedhere
\end{equation*}
	 \end{proof}


	\subsubsection{Proof of Proposition \ref{prop:cneq0}}
	Let us define, as in \cite[(5.1)]{HuangTernary},
	 $$\CJ_w(\bc):=\int_{0}^\infty\frac{\CI_{r}(w;\frac{\bc}{L})}{r}\operatorname{d}r.$$ 
	 The arguments of \cite[Proof of Theorem 5.2]{HuangTernary} show that 
	 $$\CJ_w(\bc)\ll_N |\bc|^{-N},$$
	 and moreover, with the replacement of \cite[Theorem 4.1 (2)]{HuangTernary} by Proposition \ref{prop:qsumcneq0}, lead to the asymptotic formula in Proposition \ref{prop:cneq0} with the (well-defined) constant \begin{equation}\label{eq:CK}
	 	\CK_{L,\bGamma}(w):=\frac{1}{L^4}\sum_{\substack{\chi\bmod L\\ \chi\text{ real}}}\sum_{\substack{\substack{\bc\in\BZ^3\setminus\boldsymbol{0}\\\text{either } F^*(\bc)=0\\ \text{or } -F^*(\bc)=|\chi|\square\neq 0}}}\eta_{\blambda}(\chi;\bc)\CJ_w(\bc).
	 \end{equation}	
	 in the main term, and a power-saving term $O_\varepsilon(B^{-\iota+\varepsilon})$ with a certain numerical constant $0<\iota<1$. \qed

	 \subsection{Contribution from $\bc=\boldsymbol{0}$} 
	 We recall \eqref{eq:singint} \eqref{eq:singser}.
	 \begin{proposition}\label{prop:c=0}
	 	There exists $b_{L,\bGamma}(w)\in\BC$ such that 
	 	$$\sum_{q=1}^{\infty}\frac{S_{q,L,\blambda}(\boldsymbol{0})\CI_{\frac{q}{Q}}\left(w;\boldsymbol{0}\right)}{q^{3}}=\widehat{\mathfrak{S}}_{L,\bGamma}L^4\left(\frac{1}{2}\CI(w)\log B+b_{L,\bGamma}(w)\right)+O_\varepsilon(B^{-\frac{1}{6}+\varepsilon}).$$
	 \end{proposition}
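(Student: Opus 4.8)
The plan is to treat this zero Poisson frequency exactly as the corresponding term is handled in \cite{H-Bdelta} and \cite{HuangTernary}, the only genuinely new feature being that the congruence condition $(L,\bGamma)$ enters the ``bad'' exponential sums and converts Heath-Brown's singular series into $\widehat{\mathfrak{S}}_{L,\bGamma}$. First I would decouple the arithmetic from the analytic content by Mellin inversion: setting $g(r):=\CI_{r}(w;\boldsymbol{0})$ as in \eqref{eq:JqL}, the support property of Heath-Brown's weight $h$ forces $g(r)=0$ once $r\gg_{w}1$, so the $q$-sum is in fact finite, and with $\widetilde{g}$ the Mellin transform of $g$ and $D(s):=\sum_{q=1}^{\infty}S_{q,L,\blambda}(\boldsymbol{0})q^{-3-s}$ one has
$$\sum_{q=1}^{\infty}\frac{S_{q,L,\blambda}(\boldsymbol{0})\,\CI_{\frac{q}{Q}}(w;\boldsymbol{0})}{q^{3}}=\frac{1}{2\pi i}\int_{(\sigma)}\widetilde{g}(s)\,Q^{s}\,D(s)\,\mathrm{d}s,$$
valid for $\sigma>\tfrac{1}{2}$, where the Dirichlet series converges absolutely since $S_{q,L,\blambda}(\boldsymbol{0})\ll q^{5/2+\eps}$. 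Everything then reduces to locating the poles of $D(s)$ and of $\widetilde{g}(s)$ and pushing the contour to the left.

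For $D(s)$ I would exploit the factorization $S_{q,L,\blambda}=S^{(1)}_{q,L,\blambda}S^{(2)}_{q,L,\blambda}$ established above. Since $\CT(q_{1};\boldsymbol{0})=\sum_{a\bmod q_{1}}^{*}\big(\tfrac{a}{q_{1}}\big)$ vanishes unless $q_{1}$ is a perfect square — in which case $S^{(1)}_{q,L,\blambda}(\boldsymbol{0})=\iota_{q_{1}}^{3}\phi(q_{1})=q_{1}^{3/2}\phi(q_{1})$ — the part of $D(s)$ carried by $q_{1}$ coprime to $\Omega$ is $\sum_{(r,\Omega)=1}\phi(r^{2})r^{-3-2s}=\prod_{p\nmid\Omega}\tfrac{1-p^{-2-2s}}{1-p^{-1-2s}}$, which near $s=0$ behaves like $\zeta(1+2s)/\zeta(2+2s)$ and hence has a simple pole there; after the decomposition \eqref{eq:CAchic} only the real characters $\chi$ survive into the residue, because $\sum_{r}\phi(r^{2})\chi(r)^{2}r^{-3-2s}$ is regular at $s=0$ whenever $\chi^{2}$ is non-principal, and combining this with the bad-prime factor $\sum_{q_{2}\mid\Omega^{\infty}}S^{(2)}_{q_{2},L,\blambda}(\boldsymbol{0})q_{2}^{-3-s}$ (absolutely convergent for $\operatorname{Re}(s)>-\tfrac{1}{2}$ by \eqref{eq:S2bd}) shows that $D(s)$ is holomorphic for $\operatorname{Re}(s)>-\tfrac{1}{2}$ apart from a simple pole at $s=0$. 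The residue is then identified by recognising $1+p^{-1}=\sigma_{p}$ at the good primes $p\nmid\Omega$ and, at the bad primes, that $\sum_{q_{2}\mid\Omega^{\infty}}q_{2}^{-3}\sum_{\chi\text{ real}}\CA_{q_{2},L,\blambda}(\chi;\boldsymbol{0})=L^{4}\prod_{p\mid\Omega}\sigma_{p}(L,\bGamma)$ — the evaluation of the local densities \eqref{eq:sigmapL} at the bad primes, as in \cite{PART1} — so that $\operatorname{Res}_{s=0}D(s)=\tfrac{1}{2}L^{4}\widehat{\mathfrak{S}}_{L,\bGamma}$, the factor $\tfrac{1}{2}$ coming from the constraint $q_{1}=\square$ and the convergence factors $1-p^{-1}$ from $\zeta(1+2s)/\zeta(2+2s)$ (that only real $\chi$ contribute is the same phenomenon as in \eqref{eq:CK}). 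On the analytic side $\widetilde{g}(s)$ is holomorphic for $\operatorname{Re}(s)>0$ with a simple pole at $s=0$ of residue $\lim_{r\to 0^{+}}\CI_{r}(w;\boldsymbol{0})$, and this limit equals the weighted singular integral $\CI(w)$ of \eqref{eq:singint}: by the coarea formula $\CI_{r}(w;\boldsymbol{0})=\int_{\BR}h(r,y)W(y)\,\mathrm{d}y$ with $W(y)=\int_{F=y}w/|\nabla F|$, and $h(r,\cdot)$ being an approximate identity as $r\to 0$, one gets $W(0)=\CI(w)$ — this, together with the convergence of the singular integral for a ternary cone, being part of \cite{H-Bdelta}.

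Finally I would shift the contour to a line $\operatorname{Re}(s)=-\delta$ with $0<\delta<\tfrac{1}{2}$, inside the region of holomorphy. The integrand $\widetilde{g}(s)Q^{s}D(s)$ has a double pole at $s=0$, whose residue is $\tfrac{1}{2}L^{4}\widehat{\mathfrak{S}}_{L,\bGamma}\CI(w)\log Q$ plus a constant assembled from $\CI(w)$, the constant terms of $D$ and $\widetilde{g}$ at $s=0$, and the second-order behaviour of $\CI_{r}(w;\boldsymbol{0})$ near $r=0$; since $Q\asymp_{L}B$, so $\log Q=\log B+O_{L}(1)$, this is precisely $L^{4}\widehat{\mathfrak{S}}_{L,\bGamma}\big(\tfrac{1}{2}\CI(w)\log B+b_{L,\bGamma}(w)\big)$, which defines $b_{L,\bGamma}(w)$ when $\widehat{\mathfrak{S}}_{L,\bGamma}\neq 0$ (if $\widehat{\mathfrak{S}}_{L,\bGamma}=0$ there is a local obstruction and the whole sum is trivially $O(\cdot)$). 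The remaining integral on $\operatorname{Re}(s)=-\delta$ is $\ll Q^{-\delta}$ times a convergent integral, estimated by the rapid decay of $\widetilde{g}(s)$ on vertical lines (from repeated integration by parts, using the derivative estimates for $h$) against the polynomial growth of $D(s)$ (convexity for $\zeta(1+2s)$ in the numerator, non-vanishing of $\zeta(2+2s)$ in the denominator, absolute convergence of the $q_{2}$-sum); taking $\delta=\tfrac{1}{6}$ already gives the stated $O_{\eps}(B^{-1/6+\eps})$, which is all that Theorem \ref{thm:maincountingint} needs. The step I expect to be the main obstacle is the residue identification: pinning down $\operatorname{Res}_{s=0}D(s)$ as exactly $\tfrac{1}{2}L^{4}\widehat{\mathfrak{S}}_{L,\bGamma}$ — reconciling the Euler factors produced by the Gauss sums $\iota_{q_{1}}$, the Jacobi symbols, and the bad-prime sums $\CS_{q_{2},L,\blambda}$ with the $p$-adic densities $\sigma_{p}(L,\bGamma)$ and the normalising power of $L$ — while simultaneously controlling $r\mapsto\CI_{r}(w;\boldsymbol{0})$ near $r=0$ finely enough to isolate both the secondary constant $b_{L,\bGamma}(w)$ and the power-saving error.
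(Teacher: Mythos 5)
Your argument is essentially the paper's: both rest on the fact that the Dirichlet series $D(s)=\sum_{q}S_{q,L,\blambda}(\boldsymbol{0})q^{-3-s}$ has a simple pole at $s=0$ with residue $\tfrac12L^4\widehat{\mathfrak{S}}_{L,\bGamma}$ (the paper phrases this as $\varPi(s)=\zeta(2s-5)\nu(s)$ plus Perron's formula, i.e.\ Heath-Brown's Lemmas 30--31), combined with $\CI_r(w;\boldsymbol{0})\to\CI(w)$ as $r\to0$ (Heath-Brown's Lemma 13). The only packaging difference is that you take a single Mellin transform in $r=q/Q$ and read off a double pole at $s=0$, whereas the paper applies Perron to the partial sums $\sum_{q\le X}S_{q,L,\blambda}(\boldsymbol{0})$ and then does partial summation against $\CI_{q/Q}(w;\boldsymbol{0})$; these are interchangeable, and your residue identification via the $S^{(1)}S^{(2)}$/character decomposition (only real $\chi$ producing the pole, the bad-prime sums reassembling into $L^4\prod_{p\mid\Omega}\sigma_p(L,\bGamma)$, the good Euler factors into $\prod_{p\nmid\Omega}(1-p^{-1})\sigma_p$ with the $\tfrac12$ coming from $\zeta(1+2s)$) is the same local-density computation that the paper defers to \cite{H-Bdelta,PART1}, and it matches the paper's value of the residue.

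One point is too quick: the parenthetical claim that the case $\widehat{\mathfrak{S}}_{L,\bGamma}=0$ is ``trivially'' fine. In your own expansion the residue of $\widetilde{g}(s)Q^{s}D(s)$ at $s=0$ is $\CI(w)\rho_{-1}\log Q+\CI(w)\rho_{0}+g_{0}\rho_{-1}$, where $\rho_{-1},\rho_{0}$ are the Laurent coefficients of $D$ at $s=0$ and $g_{0}$ is the constant term of $\widetilde{g}$; if $\rho_{-1}=\tfrac12L^4\widehat{\mathfrak{S}}_{L,\bGamma}=0$ the term $\CI(w)\rho_{0}$ survives and is not obviously zero, so the asserted $O_\eps(B^{-1/6+\eps})$ bound for the whole sum does not follow ``trivially''. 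To recover the statement in that degenerate case you would either have to show that the constant term is itself a multiple of $\widehat{\mathfrak{S}}_{L,\bGamma}$ (which is what the paper implicitly asserts through its expression for $b_{L,\bGamma}(w)$), or observe that for the application to Theorem \ref{thm:maincountingint} only the weaker statement with an unconstrained constant times $L^4$ is needed. Apart from this edge case the plan is sound, and the step you flag as the main obstacle (pinning the residue to $\tfrac12L^4\widehat{\mathfrak{S}}_{L,\bGamma}$ and controlling $\CI_r(w;\boldsymbol{0})$ near $r=0$) is exactly the content the paper also leaves to the cited references.
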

 \begin{proof}
 Let us consider the formal series
	 $$\varPi(s):=\sum_{q=1}^{\infty}\frac{S_{q,L,\blambda}(\boldsymbol{0})}{q^s}=\prod_{p<\infty}\sum_{t=0}^{\infty}\frac{S_{p^t,L,\blambda}(\boldsymbol{0})}{p^{ts}}.$$
	 For $p\nmid \Omega$, we have
	 $$S_{p,L,\blambda}(\boldsymbol{0})=0,\quad S_{p^2,L,\blambda}(\boldsymbol{0})=p^5\left(1-\frac{1}{p}\right).$$
	 	Therefore $$\varPi(s)=\zeta(2s-5)\nu(s)$$ for a certain $\nu(s)$ holomorphic in $\Re(s)>\frac{17}{6}$.
	 Using Perron's formula as in the argument of \cite[Lemmas 30\&31]{H-Bdelta} shows that 
	 $$\sum_{q\leqslant X}S_{q,L,\blambda}(\boldsymbol{0})=\frac{1}{6}L^4\widehat{\mathfrak{S}}_{L,\bGamma}X^3+O_\varepsilon(X^{\frac{17}{6}+\varepsilon}),$$
	 $$\sum_{q\leqslant X}\frac{S_{q,L,\blambda}(\boldsymbol{0})}{q^3}=\frac{1}{2}\widehat{\mathfrak{S}}_{L,\bGamma}\left(L^4\log X+2\gamma\right).$$
	 We emphasize that the residue is taken at e.g. $s=0$ of $\zeta(2s+1)$. That's why the factor $\frac{1}{2}$ appears. (See Remark \ref{rmk:1/2} for an interpretation.) Using \cite[Lemma 13]{H-Bdelta} (see also \cite[Lemma 6.2]{PART1}) and arguing as in \cite[p. 202--p. 203]{H-Bdelta} (see also \cite[\S5.2, (5.11)]{HuangTernary}), we find that
	 $$b_{L,\bGamma}(w)=\widehat{\mathfrak{S}}_{L,\bGamma}\left(\CI(w)(L^{-4}\gamma)+K_w\right),$$ where $K_w\in\BR$ depends on $w$. In particular for any $\gcd(d,L)=1$,
\begin{equation}\label{eq:bflip}
		 b_{L,d\bGamma}=b_{L,\bGamma}.\qedhere
\end{equation}
 \end{proof}

\subsection{Proof of Theorem \ref{thm:maincountingint}}
Recalling \eqref{eq:Nwdelta}, this is a direct consequence of Propositions \ref{prop:cneq0} \ref{prop:c=0}, on setting $$\CH_{L,\bGamma}(w):=b_{L,\bGamma}(w)+\CK_{L,\bGamma}(w).$$  
The proof is completed. \qed

	 \subsection{Proof of Theorem \ref{thm:mainprimitive}}
	 We are ready to deduce our main counting result, under the assumption that $\Omega\mid L$. Our argument follows closely \cite[p. 204--205]{H-Bdelta}.
	 
	 Recall \eqref{eq:sigmapL}. It is clear that for any $p\nmid L$, $\sigma_{p}(L,\bGamma)=\sigma_{p}$ which is the usual $p$-adic density of $\CW$.  Let us define the corresponding $p$-density for $\CW^o$: $$\sigma_{p}^{o}(L,\bGamma) :=\lim_{k\to\infty}\frac{\#\{\bx\in(\BZ/p^{k}\BZ)^3: p\nmid \bx,F(\bx)\equiv 0\bmod p^k,\bx\equiv\bGamma\bmod p^{\operatorname{ord}_p(L)}\}}{p^{2k}}.$$ For $p\nmid L$, we write $$\sigma_{p}^{o}=\sigma_{p}^{o}(L,\bGamma),$$ and we have that $$\left(1-\frac{1}{p}\right)\sigma_{p}=\sigma_{p}^{o}.$$
	 For $p\mid L$, it follows from the expression \eqref{eq:sigmapL} that for every $d$ with $\gcd(d,L)=1$ (since $\bGamma\in\CW^o(\BZ/L\BZ)$, $p\nmid \bGamma$), \begin{align*}
	 	\sigma_{p}(L,d\bGamma)=\sigma_{p}(L,\bGamma)=\sigma_{p}^{o}(L,\bGamma),
	 \end{align*} whence
	  $$\widehat{\mathfrak{S}}_{L,\overline{d}\bGamma}=\widehat{\mathfrak{S}}_{L,\bGamma}$$ for any $d$ coprime with $L$. 
	
	 We shall make use of the formulas (derived from the prime number theorem)
	 $$\sum_{\substack{d\leqslant D\\\gcd(d,L)=1}}\mu(d)\ll D\exp\left(-c_0(\log D)^\frac{1}{2}\right).$$
	 $$\sum_{\substack{d\leqslant D\\\gcd(d,L)=1}}\frac{\mu(d)}{d}\ll \exp\left(-c_1(\log D)^\frac{1}{2}\right),$$
	 $$\sum_{\substack{d\leqslant D\\\gcd(d,L)=1}}\frac{\mu(d)\log d}{d}=-1+O\left(\exp\left(-c_2(\log D)^\frac{1}{2}\right)\right),$$ and for any non-principal real character $\chi\bmod L$,
	 $$\sum_{\substack{d\leqslant D\\\gcd(d,L)=1}}\frac{\mu(d)\chi(d)}{d}=\BL(1,\chi)^{-1}+O\left( \exp\left(-c_3(\log D)^\frac{1}{2}\right)\right),$$
	 where $c_0,c_1,c_2,c_3>0$ are fixed numerical constants.
	 
	 The counting functions \eqref{eq:CNW} \eqref{eq:CNWo} are related by a Möbius inversion:
	 $$\CN_{\CW^o}(w,(L,\bGamma);B)=\sum_{\substack{d\ll B\\ \gcd(d,L)=1}}\mu(d)\CN_{\CW}\left(w,(L,\overline{d}\bGamma);\frac{B}{d}\right).$$ On choosing the truncation factor $D=\sqrt{B}$, summing over the terms obtained from Theorem \ref{thm:maincountingint} of order $B\log B$, we obtain
	 \begin{align*}
	  &\frac{1}{2}\widehat{\mathfrak{S}}_{L,\bGamma}\CI(w)\sum_{\substack{d\leqslant \sqrt{B}\\\gcd(d,L)=1}}\mu(d)\frac{B}{d}\log\left(\frac{B}{d}\right)\\ =&\frac{1}{2}\widehat{\mathfrak{S}}_{L,\bGamma}\CI(w)B+O\left(B\log B\exp\left(-c_4(\log B)^\frac{1}{2}\right)\right).
	 \end{align*}
 Using \eqref{eq:bflip}, summing over the terms involving $b_{L,\overline{d}\bGamma}$ contributes only $O\left(B\exp\left(-c_5(\log B)^\frac{1}{2}\right)\right)$. 

We next focus on the terms with $\CK_{L,\bGamma}(w)$ \eqref{eq:CK}. 
	 We record the following ``flipping formula'' \cite[Lemma 4.2]{PART2}: For every $d\in\BN$ with $\gcd(d,L)=1$, $q\mid L^\infty$, and $\chi\bmod L$,
\begin{equation}\label{eq:flip}
	\CA_{q,L,d\blambda}(\chi;\bc)=\chi(d)^c \CA_{q,L,\blambda}(\chi;\bc).
\end{equation}
Recalling \eqref{eq:etachic1} \eqref{eq:etachic2}, by \eqref{eq:flip}, for any admissible $\bc$ and real $\chi$ (i.e either $F^*(\bc)=0$, or $-F^*(\bc)\neq 0$ and \eqref{eq:creal} holds)
\begin{equation}\label{eq:etaflip}
	\eta_{\overline{d}\blambda}(\chi;\bc)=\chi(d)\eta_{\blambda}(\chi;\bc). 
\end{equation}
Hence $$\sum_{\substack{d\leqslant \sqrt{B}\\\gcd(d,L)=1}}\frac{\mu(d)\CK_{L,\overline{d}\bGamma}(w)}{d}=\CG_{L,\bGamma}(w)+O\left(\exp\left(-c_6(\log B)^\frac{1}{2}\right)\right),$$
where (note that the principal characters have negligible contribution)
\begin{equation}\label{eq:CG}
	\CG_{L,\bGamma}(w):=\frac{1}{L^4}\sum_{\substack{\chi\bmod L\\ \text{non-principal  real}}}\BL(1,\chi)^{-1}\sum_{\substack{\substack{\bc\in\BZ^3\setminus\boldsymbol{0}\\\text{either } F^*(\bc)=0\\ \text{or } -F^*(\bc)=|\chi|\square\neq 0}}}\eta_{\blambda}(\chi;\bc)\CJ_w(\bc).
\end{equation}
The argument of \cite[p. 205]{H-Bdelta} shows that summing over $d>\sqrt{B}$ only adds to an error $O\left(B\exp\left(-c_7(\log B)^\frac{1}{2}\right)\right)$.

Combining everything we obtained so far proves the main theorem. \qed

\section{From the punctured affine cone to the projective conic}\label{se:puncturedtoprojective}
Let $\bLambda\in\mathcal{C}(\BZ/L\BZ)$ be the projective image of $\bGamma\in \CW^o(\BZ/L\BZ)$. Let $\omega_f^C$ be the finite Tamagawa measure on $$C(\mathbf{A}_\BQ^\infty)=\mathcal{C}(\widehat{\BZ}):=\prod_{p<\infty}\mathcal{C}(\BZ_p).$$ The pair $(L,\bLambda)$ specifies a non-archimedean congruence neighbourhood $\CD(L,\bLambda)\subset \mathcal{C}(\widehat{\BZ})$ as in \cite[(1.2)]{PART1}.
We assume that for all $\bx\in\BR^3$,
\begin{equation}\label{eq:wsymmetric}
	w(-\bx)=w(\bx).
\end{equation}
We thus define the counting function $$\CN_{C}(w,(L,\bLambda);B)=\sum_{\substack{P\in C(\BQ)\cap \CD(L,\bLambda)}}w\left(\frac{\bx(P)}{B}\right),$$ where, $\bx(P)\in\BZ_{\text{prim}}^3$ denotes a representative of a rational point $P$ by primitive integer vectors.
\begin{proposition}\label{prop:CNC}
We have, assuming \eqref{eq:wsymmetric},
	$$\CN_{C}(w,(L,\bLambda);B)=\frac{1}{4}\CI(w)\omega_f^C(\CD(L,\bLambda))+O\left(B\exp\left(-c_8(\log B)^\frac{1}{2}\right)\right).$$
\end{proposition}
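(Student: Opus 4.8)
The plan is to relate the projective count $\CN_C(w,(L,\bLambda);B)$ to the punctured-cone count $\CN_{\CW^o}(w,(L,\cdot);B)$ of Theorem \ref{thm:mainprimitive}, summed over the finitely many residues $\bGamma\in\CW^o(\BZ/L\BZ)$ lying above $\bLambda$, and then to show that the ``obstruction term'' $\CG_{L,\bGamma}(w)$ cancels upon this summation. First I would set up the bijection: a rational point $P\in C(\BQ)$ has exactly two primitive integer representatives $\pm\bx(P)\in\BZ_{\operatorname{prim}}^3$, and under the assumption \eqref{eq:wsymmetric} the two contribute equally to $w(\bx(P)/B)$. The primitive vectors $\bx\in\BZ_{\operatorname{prim}}^3$ with $F(\bx)=0$ and $\bx\bmod L$ lying above $\bLambda$ are precisely those with $\bx\equiv\bGamma\bmod L$ for some $\bGamma$ in the fibre $\pi^{-1}(\bLambda)\subset\CW^o(\BZ/L\BZ)$ of the projection $\pi$; distinct such $\bGamma$ give disjoint congruence classes. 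Hence
$$2\,\CN_C(w,(L,\bLambda);B)=\sum_{\substack{\bGamma\in\CW^o(\BZ/L\BZ)\\ \pi(\bGamma)=\bLambda}}\CN_{\CW^o}(w,(L,\bGamma);B),$$
the factor $2$ accounting for the two representatives $\pm\bx(P)$ (and using that $-\bGamma$ and $\bGamma$ both lie in the fibre, so the right-hand sum already double counts).

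Next I would invoke Theorem \ref{thm:mainprimitive}: since $\Omega\mid L$, each summand equals $\big(\tfrac12\widehat{\mathfrak S}_{L,\bGamma}\CI(w)+\CG_{L,\bGamma}(w)\big)B+O(B\exp(-c_0(\log B)^{1/2}))$, and the fibre has $O_L(1)$ elements so the error terms combine harmlessly. For the main term I would first identify $\sum_{\pi(\bGamma)=\bLambda}\widehat{\mathfrak S}_{L,\bGamma}$ with (a multiple of) the Tamagawa volume $\omega_f^C(\CD(L,\bLambda))$: summing the $p$-adic densities $\sigma_p^o(L,\bGamma)$ over the fibre at $p\mid L$ reassembles the count of \emph{all} primitive solutions above $\bLambda\bmod p^{\operatorname{ord}_p L}$, which is exactly what the local Tamagawa factor of $C$ at $p$ records (via the torsor structure $W^o\to C$ and the standard comparison of the measure on the $\BG_m$-torsor with the measure on the base, where the $(1-p^{-1})$ factors are the local Tamagawa densities of $\BG_m$). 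Combined with the real place, where $\CI(w)$ plays the role of the archimedean density, this yields $\tfrac14\CI(w)\omega_f^C(\CD(L,\bLambda))$ after dividing by $2$, the remaining $\tfrac12$ being the $\tfrac12$-factor already present in Theorem \ref{thm:mainprimitive} whose toric/Tamagawa interpretation is promised in the surrounding discussion.

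It then remains to show $\sum_{\pi(\bGamma)=\bLambda}\CG_{L,\bGamma}(w)=0$. Here I would use the flipping formula: the fibre $\pi^{-1}(\bLambda)$ is a torsor under $(\BZ/L\BZ)^\times$ acting by $\bGamma\mapsto\overline d\bGamma$, and by \eqref{eq:etaflip} (equivalently \eqref{eq:flip}), $\eta_{\overline d\blambda}(\chi;\bc)=\chi(d)\eta_{\blambda}(\chi;\bc)$, so that $\CG_{L,\overline d\bGamma}(w)$ picks up the factor $\chi(d)$ in the $\chi$-summand of \eqref{eq:CG}. Summing over a full set of representatives $d$ of $(\BZ/L\BZ)^\times$ (or over the fibre) gives $\sum_d\chi(d)=0$ for every \emph{non-principal} $\chi$, and since \eqref{eq:CG} only involves non-principal real characters, the entire sum vanishes term by term. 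The main obstacle I anticipate is the bookkeeping in the second step: carefully matching the product of local densities $\prod_p(1-p^{-1})\sigma_p^o(L,\bGamma)$, summed over the fibre, with the intrinsically-defined finite Tamagawa measure $\omega_f^C(\CD(L,\bLambda))$ — in particular pinning down the normalization constants and confirming that the archimedean singular integral $\CI(w)$ combines with the correct power of $2$ to produce exactly $\tfrac14$, rather than, say, $\tfrac12$ or $\tfrac18$. The character-sum cancellation, by contrast, should be essentially immediate once \eqref{eq:etaflip} is in hand.
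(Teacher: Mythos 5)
Your overall strategy is the paper's: you decompose over the fibre of $\CW^o(\BZ/L\BZ)\to\mathcal{C}(\BZ/L\BZ)$ above $\bLambda$, which is precisely the free orbit $\{\gamma\bGamma:\gamma\in(\BZ/L\BZ)^\times\}$, obtaining $\CN_C(w,(L,\bLambda);B)=\tfrac12\sum_{\gamma}\CN_{\CW^o}(w,(L,\gamma\bGamma);B)$ thanks to \eqref{eq:wsymmetric}, and you kill the obstruction terms by the flipping formula: by \eqref{eq:etaflip} each $\chi$-summand of \eqref{eq:CG} picks up $\chi(\gamma)$, and $\sum_{\gamma\bmod L}\chi(\gamma)=0$ for the non-principal real $\chi$ appearing there. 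This cancellation step is correct and is exactly the mechanism in the paper; the only cosmetic difference is that the paper keeps the M\"obius $d$-sum open and cancels at each fixed $d$, whereas you cancel directly at the level of $\CG_{L,\gamma\bGamma}(w)$.

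The genuine gap is the step you yourself flag and then leave unresolved: the identification of the resulting constant with $\tfrac14\CI(w)\,\omega_f^C(\CD(L,\bLambda))$. Quoting Theorem \ref{thm:mainprimitive} as a black box, your main term is $\tfrac14\phi(L)\widehat{\mathfrak{S}}_{L,\bGamma}\CI(w)B$, so you must actually prove an identity of the shape $\phi(L)\widehat{\mathfrak{S}}_{L,\bGamma}=\omega_f^C(\CD(L,\bLambda))$, and your appeal to a ``standard comparison of torsor and base measures'' does not pin this down. The paper closes this concretely in two moves: first it reduces to the case where every $p\mid L$ is a prime of smooth reduction for $\CW^o$ and $\mathcal{C}$ (legitimate because both sides are additive under refining the residue class to a modulus divisible by $L_0$), so that Hensel lifting gives $\sigma_p(L,\bGamma)=p^{-2\operatorname{ord}_p(L)}$ for all $p\mid L$; second it evaluates the Euler product, $\phi(L)\prod_{p\mid L}p^{-2\operatorname{ord}_p(L)}\prod_{p\nmid L}(1-p^{-1})\sigma_p=\prod_{p\mid L}(1-p^{-1})p^{-\operatorname{ord}_p(L)}\prod_{p\nmid L}(1-p^{-1})\sigma_p=\omega_f^C(\CD(L,\bLambda))$. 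Moreover, the normalization worry you raise (``$\tfrac14$ rather than $\tfrac12$ or $\tfrac18$'') is not idle: the convergence factors $(1-p^{-1})$ at $p\mid L$ built into \eqref{eq:singser} have to be played off against the coprimality restriction $\gcd(d,L)=1$ in the M\"obius sums (the restricted sum $\sum_{\gcd(d,L)=1}\mu(d)\log d/d$ carries a factor $\prod_{p\mid L}(1-p^{-1})^{-1}$ relative to the unrestricted one), which is why the paper re-opens the inversion and performs the $\gamma$- and $d$-sums jointly, with the bookkeeping delegated to the cited formulas of PART1, rather than summing the stated asymptotic of Theorem \ref{thm:mainprimitive} over the fibre. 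Until you carry out this local-density/Tamagawa computation, the constant in the Proposition is not established, so this step is the substantive missing content of your proof rather than a routine verification.
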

\begin{proof}
	As in \cite[Proof of Theorem 1.1]{PART1}, we may fix $L_0\in\BN$ such that $\CW^o,\mathcal{C}$ are smooth over $\BZ[\prod_{p\mid L_0}p^{-1}]$ and that for all $p\mid L_0$, $L_0\mid L_1$ $\bGamma_1\in \CW^o(\BZ/L_1\BZ)$, $$\sigma_{p}(L_1,\bGamma_0)=\frac{1}{p^{2\operatorname{ord}_p(L_1)}}.$$
	Moreover, we may assume that $L_0\mid L$.  We then have 
\begin{align*}
	\CN_{C}(w,(L,\bLambda);B)=&\frac{1}{2}\sum_{\gamma\in(\BZ/L\BZ)^\times}\CN_{\CW^o}(w,(L,\gamma\bGamma);B)\\	=&\frac{1}{2}\sum_{\gamma\in(\BZ/L\BZ)^\times}\sum_{\substack{d\ll B\\\gcd(d,L)=1}}\mu(d)\CN_{\CW}\left(w,(L,\gamma\overline{d}\bGamma);\frac{B}{d}\right).
\end{align*}
Using Theorem \ref{thm:mainprimitive}, 
\begin{multline*}
	\CN_{C}(w,(L,\bLambda);B)=\frac{1}{2}\sum_{\gamma\in(\BZ/L\BZ)^\times}\sum_{\substack{d\ll B\\\gcd(d,L)=1}}\frac{\mu(d)}{d}\left(\frac{1}{2}\widehat{\mathfrak{S}}_{L,\gamma\overline{d}\bGamma}\CI(w)+\CG_{L,\gamma\overline{d}\bGamma}(w)\right)B\\+O\left(B\exp\left(-c_0'(\log B)^\frac{1}{2}\right)\right)
\end{multline*}
By \eqref{eq:etaflip}, comparing \eqref{eq:CG}, we have
$$\CG_{L,\gamma\overline{d}\bGamma}(w)=\frac{1}{L^4}\sum_{\substack{\chi\bmod L\\ \text{non-principal  real}}}\chi(\gamma)\chi(d)\BL(1,\chi)^{-1}\sum_{\substack{\substack{\bc\in\BZ^3\setminus\boldsymbol{0}\\\text{either } F^*(\bc)=0\\ \text{or } -F^*(\bc)=|\chi|\square\neq 0}}}\eta_{\blambda}(\chi;\bc)\CJ_w(\bc).$$
We observe that if $\chi\bmod L$ is non-principal,
$$\sum_{\gamma\bmod L}\chi(\gamma)=0,$$ and hence for each fixed $d$, we have $$\sum_{\gamma\bmod L}\CG_{L,\gamma\overline{d}\bGamma}(w)=0.$$ Extending $d$-sum to infinity upon adding an error of the form $O\left(B\exp\left(-c_9(\log B)^\frac{1}{2}\right)\right)$, using $\widehat{\mathfrak{S}}_{L,\gamma\overline{d}\bGamma}=\widehat{\mathfrak{S}}_{L,\bGamma}$, and proceeding in a similar manner as in \cite[(6.17) (6.19) (7.18)]{PART1}, 
\begin{align*}
	\sum_{\gamma\in(\BZ/L\BZ)^\times}\sum_{\gcd(d,L)=1}\frac{\mu(d)}{d}\widehat{\mathfrak{S}}_{L,\gamma\overline{d}\bGamma}&=\phi(L)\prod_{p\mid L}\frac{1}{p^{2\operatorname{ord}_p(L)}}\prod_{p\nmid L}\left(1-\frac{1}{p}\right)\sigma_{p}\\ &=\prod_{p\mid L}\left(1-\frac{1}{p}\right)\frac{1}{p^{\operatorname{ord}_p(L)}}\prod_{p\nmid L}\left(1-\frac{1}{p}\right)\sigma_{p}\\ &=\omega_f^C(\CD(L,\bLambda)).\qedhere
\end{align*}
\end{proof}

\begin{remark}\label{rmk:1/2}
	By \cite[Remark 6.3]{PART1}, the factor $\frac{1}{2}\CI(w)$ equals the (weighted)  Tamagawa volume of the real projective conic $C(\BR)$. However, in stark constrast to all higher dimensional quadrics, Peyre's constant for $C\subseteq\BP^2$ equals $\frac{1}{2}$ (instead of $1$, cf. \cite[(6.5)]{PART1}). Indeed, $C\simeq\BP^1$ embeds into $\BP^2$ via a complete linear system of the line bundle $\CO(2)$. This gives an interpretation of the extra factor $\frac{1}{2}$ in Theorem \ref{thm:mainprimitive} and Proposition \ref{prop:CNC}.\footnote{I'm very grateful to Daniel Loughran for helpful discussion about this fact during the conference Heath-Brown@70 at University of Oxford.}
\end{remark}

\section{Perspectives on the Brauer--Manin obstruction}\label{se:BM}
 The projective conic $C=(F=0)\subset\BP^2$ being isomorphic to $\BP^1$ (we always assume that $C(\BQ)\neq\varnothing$), the universal torsor (up to isomorphism) over $C$ is $\BA^2\setminus\boldsymbol{0}$, as a  $\BG_{\operatorname{m}}$-torsor. The punctured affine cone $W^o$ is an intermediate $\BG_{\operatorname{m}}$-torsor over $C$, whose class in $H^1(C,\BG_{\operatorname{m}})$ has index two. So there exists a degree two map $$\pi:\BA^2\setminus\boldsymbol{0}\to W^o.$$ The Brauer group of $W^o$ (modulo the constant part) is isomorphic to $H^1(\BQ,\BZ/2\BZ)\simeq \BQ^*/\BQ^{*2}$, which can be identified with the set of all field extensions of degree two over $\BQ$. To parametrise all points of $W^o$ it is necessary to twist the map $\pi$ by elements in $\BQ^*/\BQ^{*2}$, and we denoted the twisted map by $$\pi^a:(\BA^2\setminus\boldsymbol{0})^a\to W^o$$ for $a\in\BQ^*/\BQ^{*2}$.
 Having fixed integral models for each, the diagram above extends to a $\BZ$-map $$(\BA_{\BZ}^2\setminus\boldsymbol{0})^a\to \CW^o\to \mathcal{C}\simeq \BP^1_{\BZ}.$$ Then by the Borel--Serre theorem (cf. e.g. \cite[Proposition 6.3]{C-D-X}), the number of $a\in\BQ^*/\BQ^{*2}$ such that $\pi^a((\BA_{\BZ}^2\setminus\boldsymbol{0})^a(\widehat{\BZ}))\cap \CD_{\CW^o}(L,\bGamma)\neq\varnothing$ is finite (we recall $\CD_{\CW^o}(L,\bGamma)\subset\CW^o(\widehat{\BZ})$ is the closed subset prescribed by the local condition $(L,\bGamma)$). This corresponds to the fact that for each fixed integral model, there are only finitely elements in the Brauer group which are ``active''.

Let us compare this with our analytic result Theorem \ref{thm:mainprimitive}. Although we are unable to compute the precise value of the constant $\CG_{L,\bGamma}(w)$, it features the values at $s=1$ of the $L$-functions associated to non-trivial real characters, which do not vanish. Note that there are only finitely many characters which appear once the adelic neighbourhood is fixed. They should be viewed as ``effects'' of the Brauer--Manin obstruction on the open adelic neighbourhood regarding the real condition prescribed by $w$ and the congruence condition $(L,\bGamma)$.\footnote{I'm very grateful to Yang Cao for generously sharing ideas for the materials in this section.}

\section*{Acknowledgements} 
The author was partially supported by National Key R\&D Program of China No. 2024YFA1014600.


\begin{thebibliography}{99}
			\bibitem{Burgess} D. A. Burgess, On character sums and L-series, II. \emph{Proc. London Math. Soc.} \textbf{13} (1963), 524–536.
			\bibitem{C-D-X} Y. Cao, C. Demarche \and F. Xu, Comparing descent obstruction and Brauer-Manin obstruction for open varieties. \emph{Trans. Amer. Math. Soc.} \textbf{371} (2019), no. 12, 8625-8650.
			\bibitem{Cao-Xu} Y. Cao, \and F. Xu, Strong approximation with Brauer–Manin obstruction for toric varieties, \emph{Ann. Inst. Fourier (Grenoble)} \textbf{68} (2018) 1879–1908.
			\bibitem{H-Bdelta}  D.R. Heath-Brown, A new form of the circle method, and its application to quadratic forms. {\em J. Reine Angew. Math.} {\bf 481} (1996), 149--206.
			\bibitem{PART1} Z. Huang; D. Schindler \and A. Shute, Quantitative optimal weak approximation for projective quadrics, arXiv:2405.05592.
			\bibitem{PART2} Z. Huang; D. Schindler \and A. Shute, Quantitative strong approximation for quaternary quadratic forms, arXiv:2501.16766.
			\bibitem{HuangTernary} Z. Huang, Quantitative strong approximation for ternary quadratic forms I, arXiv:2412.03350.
			\bibitem{Iwaniec-Kolwalski}	H. Iwaniec \and E. Kowalski, \emph{Analytic number theory}. American Mathematical Society Colloquium Publications, \textbf{53}. American Mathematical Society, Providence, RI, 2004. xii+615 pp.
			\bibitem{Serre} J.-P. Serre, \emph{A course in arithmetic}. Graduate Texts in Mathematics \textbf{7}. New York-Heidelberg-Berlin: Springer-Verlag. viii, 115 p.
			\end{thebibliography}
\end{document}